\documentclass[notitlepage]{amsart}
\usepackage{amssymb,amsfonts,latexsym}
\usepackage{graphics,verbatim}
\usepackage{graphicx}

\newtheorem{theorem}{Theorem}[section]
\newtheorem{proposition}{Proposition}[section]
\newtheorem{lemma}{Lemma}[section]

\newtheorem{remark}{Remark}[section]
\newtheorem{definition}{Definition}[section]

\newcommand{\eps}{\varepsilon}


\newcommand{\To}{\longrightarrow}

\newcommand{\norm}[1]{\left\Vert#1\right\Vert}

\newcommand{\be} {\begin{equation}}
\newcommand{\ee} {\end{equation}}
\newcommand{\bea} {\begin{eqnarray}}
\newcommand{\eea} {\end{eqnarray}}
\newcommand{\Bea} {\begin{eqnarray*}}
\newcommand{\Eea} {\end{eqnarray*}}
\newcommand{\pa} {\partial}

\newcommand{\al} {\alpha}
\newcommand{\ba} {\beta}
\newcommand{\de} {\delta}
\newcommand{\na}{\nabla}
\newcommand{\ga} {\gamma}
\newcommand{\Ga} {\Gamma}
\newcommand{\Om} {\Omega}
\newcommand{\om} {\omega}
\newcommand{\De} {\Delta}
\newcommand{\la} {\lambda}

\newcommand{\no} {\nonumber}
\newcommand{\noi} {\noindent}
\newcommand{\lab} {\label}
\newcommand{\va} {\varphi}
\newcommand{\f}{\frac}
\newcommand{\R}{\mathbb R}
\newcommand{\N}{\mathbb N}
\newcommand{\Rn}{\mathbb R^N}

\catcode`\@=11

\makeatletter \@addtoreset{equation}{section} \makeatother

\begin{document}

\title[elliptic equation with supercritical exponents]{Semilinear nonlocal elliptic equations with critical and supercritical exponents}
\author{Mousomi Bhakta} 
\author{ Debangana Mukherjee}
\address{Department of Mathematics, Indian Institute of Science Education and Research, Dr. Homi Bhaba Road, Pune-411008, India. }

\email{ M. Bhakta: mousomi@iiserpune.ac.in} 
\email{ D. Mukherjee: debangana18@gmail.com}

\subjclass[2010]{Primary 35B08, 35B40, 35B44}
\keywords{super-critical exponent, fractional laplacian, Pohozaev identity, nonexistence, entire solution, decay estimate, gradient estimate, radial symmetry, critical exponent,  nonlocal.}
\date{}

\begin{abstract} We study the problem
\begin{equation*}
\left\{\begin{aligned}
      (-\Delta)^s u &=u^p - u^q \quad\text{in }\quad \mathbb{R}^N, \\
      u &\in \dot{H}^s(\mathbb{R}^N)\cap L^{q+1}(\mathbb{R}^N), \\
      u&>0 \quad\text{in}\quad\mathbb{R}^N,
          \end{aligned}
  \right.
\end{equation*}
where $s\in(0,1)$ is a fixed parameter, $(-\Delta)^s$ is the fractional Laplacian in $\mathbb{R}^N$, $q>p\geq \frac{N+2s}{N-2s}$ and $N>2s$.  For every $s\in(0,1)$, we establish regularity results of solutions of above equation (whenever solution exists) and we show that every solution is a classical solution. Next, we derive certain decay estimate of solutions  and the gradient of solutions at infinity for all $s\in(0,1)$.  Using those decay estimates, we  prove Pohozaev type identity in $\Rn$  and we show that the above problem does not have any solution when $p=\frac{N+2s}{N-2s}$.  We  also discuss radial symmetry and decreasing property of the solution and prove that when $p>\frac{N+2s}{N-2s}$, the above problem admits a solution . Moreover, if we consider the above equation in a bounded domain with Dirichlet boundary condition, we prove that it admits a solution for every $p\geq \frac{N+2s}{N-2s}$ and every solution is a classical solution.\end{abstract}

\maketitle

\section{ Introduction}

In this paper,  we consider the following problem:
\begin{equation}
  \label{entire}
\left\{\begin{aligned}
      (-\De)^s u &=u^p -u^q \quad\text{in }\quad \Rn, \\
      u &\in \dot{H}^s(\Rn)\cap L^{q+1}(\Rn), \\
      u&>0 \quad\text{in}\quad\Rn,
 \end{aligned}
  \right.
\end{equation}
and
\begin{equation}
  \label{eq:a3'}
\left\{\begin{aligned}
      (-\De)^s u &=u^p -u^q \quad\text{in }\quad \Om, \\
      u &=0 \quad\text{in}\quad\Rn\setminus\Om,\\
      u&>0 \quad\text{in}\quad\Om,\\
      u &\in H^s(\Om)\cap L^{q+1}(\Om), \\
       \end{aligned}
  \right.
\end{equation}
where  $s\in(0,1)$ is fixed, $(-\De)^s$ denotes the  fractional Laplace operator defined,  up to a normalization factors, as
\begin{align} \label{De-u}
  -\left(-\Delta\right)^s u(x)=\frac{c_{N,s}}{2}\int_{\mathbb{R}^N}\frac{u(x+y)-2u(x)+u(x-y)}{|y|^{N+2s}}dy, \quad x\in\Rn,
\end{align}
where $$c_{N,s}:=\f{2^{2s}s\Ga(\f{N}{2}+s)}{\pi^\f{N}{2}\Ga(1-s)}.$$
In \eqref{entire} and \eqref{eq:a3'}, $q>p\geq 2^*-1=\f{N+2s}{N-2s}$ and $N>2s$. In \eqref{eq:a3'}, $\Om$ is a bounded subset of $\Rn$ with smooth boundary.

We denote by $H^s(\Om)$ the usual fractional Sobolev space endowed with the so-called Gagliardo norm
\be\lab{norm-H}\norm{g}_{H^s(\Om)}=\norm{g}_{L^2(\Om)}+\bigg(\int_{\Om \times \Om} \frac{|g(x)-g(y)|^2}{|x-y|^{N+2s}}dxdy\bigg)^{1/2}.\ee
For further details on the fractional Sobolev spaces we refer to \cite{NPV} and  the references therein.  Note that in problem \eqref{eq:a3'},   Dirichlet boundary data is given in  $\Rn\setminus\Om$ and not simply
 on $\pa\Om$. Therefore, for the Dirichlet boundary value problem in the bounded domain,  we need to introduce a new functional space $X_0$, which, in our opinion, is the suitable space to work with.   
\be\label{eq:X0}
X_0:=\{v\in H^s(\Rn): v=0\quad \text{in}\quad \Rn\setminus\Om\}.
\ee
By \cite[Lemma 6 and 7]{SerVal3}, it follows that 
\be\label{norm-X}
||v||_{X_0}=\displaystyle\left(\int_{Q}\f{|v(x)-v(y)|^2}{|x-y|^{N+2s}}dxdy,\right)^\f{1}{2},
\ee
where $Q=\R^{2N}\setminus (\Om^c\times\Om^c)$,
is a norm on $X_0$ and $(X_0, ||.||_{X_0})$ is a Hilbert space, with the inner product 
$$<u, v>_{X_0}=\int_{Q}\f{(u(x)-u(y))(v(x)-v(y))}{|x-y|^{N+2s}}dxdy.$$
Observe that, norms in \eqref{norm-H} and \eqref{norm-X} are not same, since $\Om\times\Om$ is strictly contained in $Q$. Clearly, the integral in \eqref{norm-X} can be extended to whole $\mathbb{R}^{2N}$ as $v=0$ in $\Rn\setminus\Om$. It is well known that the embedding $X_0\hookrightarrow L^r(\Rn)$ is compact, for any $r\in[1, 2^*)$ (see \cite[Lemma 8]{SerVal3}) and  $X_0\hookrightarrow L^{2^*}(\Rn)$ is continuous (see \cite[Lemma 9]{SerVal2}) .

We set $$||u||^2_{\dot{H}^s(\Rn)}:=\f{c_{N,s}}{2}\int_{\Rn}\int_{\Rn}\f{|u(x)-u(y)|^2}{|x-y|^{N+2s}}dxdy,$$ and we define $\dot{H}^s(\Rn)$ as the completion of $C^{\infty}_0(\Rn)$ w.r.t. the norm $||u||_{\dot{H}^s(\Rn)}+|u|_{L^{2^*}(\Rn)}$ (see \cite{DMV} and \cite{PP}).

\begin{definition}\lab{def-1}
We say that $u\in \dot{H}^s(\Rn)\cap L^{q+1}(\Rn)$ is a weak solution of Eq. \eqref{entire}, if $u>0$ in $\Rn$ and for every $\va\in \dot{H}^s(\Rn)$, 
$$
\int_{\Rn}\int_{\Rn}\f{(u(x)-u(y))(\va(x)-\va(y))}{|x-y|^{N+2s}}dxdy=\int_{\Rn}u^p\va\ dx-\int_{\Rn}u^q\va\ dx$$
or equivalently, 
$$\int_{\Rn}(-\De)^{\f{s}{2}}u(-\De)^{\f{s}{2}}\va\ dx=\int_{\Rn}u^p\va\ dx-\int_{\Rn}u^q\va\ dx.$$
Similarly, when $\Om$ is a bounded domain, we say $u\in X_0\cap L^{q+1}(\Om)$ is a weak solution of Eq. \eqref{eq:a3'} if $u>0$ in $\Om$ and for every $\va\in X_0$, the above integral expression holds.
\end{definition}

\begin{definition}
A positive function $u\in C(\Rn)$ is said to be a classical solution of 
\be\lab{u-f}
(-\De)^s u=f(u) \quad\text{in}\quad \Rn,
\ee
 if $(-\De)^s u$ can be written as \eqref{De-u} and  \eqref{u-f} is satisfied pointwise in all $\Rn$.
\end{definition}

\vspace{3mm}

In recent years, a great deal of attention has been devoted to fractional and non-local operators of elliptic type. One of the main reasons comes from the fact that
this operator naturally arises in several physical phenomenon like flames propagation and chemical reaction of liquids, population
dynamics, geophysical fluid dynamics, mathematical finance etc (see \cite{A, CT, V, VIKH} and the references therein). 
\vspace{3mm}

When $s=1$,  it follows by celebrated Pohozaev identity that \eqref{entire} does not have any solution when $p=2^*-1$ and $q>p$. In this paper we prove this result for all $s\in(0,1)$ by establishing the Pohozaev identity in $\Rn$ for the equation \eqref{entire}. We recall that \eqref{entire} has an equivalent formulation by Caffarelli-Silvestre harmonic extension method in $\mathbb{R}_+^{N+1}$. For spectral fractional laplace equation in bounded domain, some Pohozaev type identities were proved in \cite{BCP, CC, CTan}.  In \cite{FW}, Fall and Weth have proved some nonexistence results associated with the problem $(-\De)^s u=f(x, u)$ in $\Om$ and $u=0$ in $\Rn\setminus\Om$ by applying method of moving spheres. 

Very recently Ros-Oton and Serra \cite[Theorem 1.1]{RS2} have proved Pohozaev identity by direct method for the bounded solution of Dirichlet boundary value problem. More precisely they have proved the following:

Let $u$ be a bounded solution of 
\begin{equation}
  \label{eq:RS-1}
\left\{\begin{aligned}
      (-\De)^s u &=f(u) \quad\text{in }\quad \Om, \\
         u &=0 \quad\text{in}\quad\Rn\setminus\Om,
 \end{aligned}
  \right.
\end{equation}
where $\Om$ is a bounded $C^{1,1}$ domain in $\Rn$, $f$ is locally Lipschitz and $\de(x)=\text{dist}(x,\pa\Om)$. Then $u$ satisfies the following identity:
$$(2s-N)\int_{\Om}uf(u)\ dx+2N\int_{\Om}F(u)\ dx=\Ga(1+s)^2\int_{\pa\Om}(\f{u}{\de^s})^2(x\cdot\nu) dS,$$
where $F(t)=\displaystyle\int_0^t f$ and $\nu$ is the unit outward normal to $\pa\Om$ at $x$ and $\Ga$ is the Gamma function. For nonexistence result with general integro-differential operator we cite \cite{RS3}.

\vspace{3mm}


To apply the technique of \cite{RS2} in the case of $\Om=\Rn$, one needs to know  decay estimate of $u$ and $\na u$ at infinity.  In \cite{RS2}, Ros-Oton and Serra have remarked that assuming certain decay condition of $u$ and $\na u$, one can show that $(-\De)^s u=u^p$ in $\Rn$ does not have any nontrivial solution for $p>\f{N+2s}{N-2s}$. In this article for \eqref{entire}
we first establish decay estimate of $u$ and $\na u$ at infinity and then using that we prove Pohozaev identity for the solution of \eqref{entire} for all $s\in(0,1)$ and consequently we deduce the nonexistence of nontrivial solution when $p=2^{*}-1$. In the appendix, using harmonic extension method in the spirit of Cabr\'{e} and Cinti \cite{CC}, we give an alternative proof of Pohozaev identity in $\Rn$ for the equation of the form 
$$(-\De)^s u=f(u) \quad\text{in}\quad\Rn,$$ where $u\in \dot{H}^s(\Rn)\cap L^{\infty}(\Rn)$ and $f\in C^2$ . The interesting fact about this proof is that, here we do not require decay estimate of $u$ and $\na u$ at infinity as we use suitable cut-off function and in limit we take that cut-off function approaches to $1$.  

On the contrary to the nonexistence result for $p=2^*-1$,  we show using constrained minimization method that Eq.\eqref{entire} admits a positive solution  when $p>2^*-1$. Moreover, we study the qualitative properties of solution. More precisely, using Moser iteration technique  we  prove that any solution, $u$, of \eqref{entire} is in $L^{\infty}(\Rn)$ and we establish decay estimate of $u$ and $\na u$. Then using the Schauder estimate from \cite{RS1} and the $L^{\infty}$ bound that we establish, we show that $u\in C^{\infty}(\Rn)$  if both $p$ and $q$ are integer and $C^{2ks+2s}(\Rn)$, where $k$ is the largest integer satisfying   $\lfloor 2ks\rfloor<p$  if $p\not\in\N$ and $\lfloor 2ks\rfloor<q$ if $p\in\N$ but $q\not\in\N$, where $\lfloor 2ks\rfloor$ denotes the greatest integer less than equal to $2ks$ . We also prove that $u$ is a classical solution.  Thanks to decay estimate of solution that we establish, we further show that solution of \eqref{entire} is radially symmetric. 

When $\Om$ is a bounded domain, we prove that \eqref{eq:a3'} admits a solution for every $p\geq 2^*-1$. For similar type of equations involving critical and supercritical exponents in the case of local operator such as $-\De$, we cite \cite{BS}, \cite{MMPT}-\cite{MP2}. For similar kind of equations with nonlocal operator we cite \cite{BMS, DDW}.

\vspace{3mm}

We turn now to a brief description of the main theorems presented below.

\begin{theorem}\lab{t:est}
Let $s\in(0,1)$, $p\geq 2^*-1$ and $q>(p-1)\f{N}{2s}-1$.  If $u$ is any weak solution of  Eq.\eqref{entire} or Eq.\eqref{eq:a3'},  then  $u\in L^{\infty}(\Rn)$. 
Moreover, if $\Om=\Rn$, then there exist two positive constants $C_1$, $C_2$ such that
\be\lab{sol-est}
C_1|x|^{-(N-2s)}\leq u(x)\leq C_2|x|^{-(N-2s)}, \quad |x|>R_0,
\ee
for some $R_0>0$.
\end{theorem}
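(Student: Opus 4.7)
The plan is to prove Theorem~\ref{t:est} in two stages: first establish the global $L^{\infty}$ bound via a Moser-type iteration, then derive the two-sided decay rate \eqref{sol-est} using the Riesz representation of $u$.

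For the $L^{\infty}$ bound (which simultaneously handles \eqref{entire} and \eqref{eq:a3'}), I would rewrite the equation as $(-\De)^s u\leq V(x)\,u$ with $V(x):=u(x)^{p-1}$, noting that the $-u^q$ term has favorable sign and can be discarded. Since $u\in L^{q+1}$, we have $V\in L^{(q+1)/(p-1)}$, and the assumption $q>(p-1)\f{N}{2s}-1$ forces $\f{q+1}{p-1}>\f{N}{2s}$, placing $V$ in a subcritical Kato class. Concretely, for $\beta\geq 1$ and $M>0$ test with $\va=u\,u_M^{2(\beta-1)}$ where $u_M:=\min\{u,M\}$. The fractional Stroock--Varopoulos inequality yields
\begin{equation*}
[u\,u_M^{\beta-1}]^2_{\dot H^s}\leq C\beta\int_{\Rn}V(x)\bigl(u\,u_M^{\beta-1}\bigr)^2\,dx,
\end{equation*}
and then H\"older with the exponent pair $\bigl(\f{q+1}{p-1},\f{q+1}{q-p+2}\bigr)$ followed by the fractional Sobolev embedding produces the self-improving estimate
\begin{equation*}
\|u\,u_M^{\beta-1}\|_{L^{2^*}}^{2}\leq C\beta\,\|V\|_{L^{m}}\,\|u\,u_M^{\beta-1}\|_{L^{2m'}}^{2},\qquad m:=\f{q+1}{p-1}>\f{N}{2s},
\end{equation*}
so that $2m'<2^*$ is precisely the integrability gain. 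Starting from $u\in L^{q+1}\supset L^{2m'}$ and iterating along a geometric sequence $\beta_k\to\infty$, then letting $M\to\infty$ by monotone convergence, yields $u\in L^{\infty}(\Rn)$.

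For the decay estimates in the case $\Om=\Rn$, the $L^{\infty}$ bound together with the fractional Schauder estimates of \cite{RS1} and $u\in L^{q+1}(\Rn)$ imply that $u$ is H\"older continuous and $u(x)\to 0$ as $|x|\to\infty$. In particular, for $|x|$ large $u<1$ and thus $f:=u^p-u^q>0$ is bounded with $f(x)\to 0$. Because $u\in\dot H^s(\Rn)$, one then obtains the Riesz potential representation
\begin{equation*}
u(x)=C_{N,s}\int_{\Rn}\f{f(y)}{|x-y|^{N-2s}}\,dy.
\end{equation*}
For the upper bound in \eqref{sol-est}, split the convolution at $|y|=|x|/2$: on $\{|y|\leq|x|/2\}$ one has $|x-y|\geq|x|/2$, producing the factor $|x|^{-(N-2s)}$; on the complementary set the smallness of $f$ there, combined with a bootstrap that successively upgrades the algebraic decay rate of $u$, closes the estimate at the optimal order $|x|^{-(N-2s)}$. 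For the lower bound, since $u\in L^{q+1}(\Rn)$ is positive and continuous, the open set $\{u<1\}$ contains a ball $B_r(x_0)$ on which $u^p-u^q\geq\f12 u^p>0$, and then
\begin{equation*}
u(x)\geq C\int_{B_r(x_0)}\f{u^p(y)-u^q(y)}{|x-y|^{N-2s}}\,dy\geq \f{C_1}{|x|^{N-2s}}\quad\text{for }|x|\text{ large}.
\end{equation*}

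The principal obstacle is the Moser iteration: one must check that $\va=u\,u_M^{2(\beta-1)}$ is admissible in $\dot H^s(\Rn)$ (respectively $X_0$), and rigorously establish the truncated Stroock--Varopoulos inequality with explicit $\beta$-dependence. Once this is secured, the arithmetic $\f{q+1}{p-1}>\f{N}{2s}$ makes the iteration close in finitely many steps, and the decay estimate then follows routinely from the Riesz splitting together with the elementary lower-bound argument above.
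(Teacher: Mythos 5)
Your first stage (the global $L^{\infty}$ bound via a direct fractional Moser/Brezis--Kato iteration with a truncated Stroock--Varopoulos inequality) is a legitimate alternative to the route taken in the paper, which instead runs the Moser iteration on the Caffarelli--Silvestre extension $w$ in $\R^{N+1}_+$ using the weighted embeddings of Tan--Xiong. One bookkeeping point: the inclusion $L^{q+1}(\Rn)\supset L^{2m'}(\Rn)$ you invoke to start the iteration is false on an unbounded domain; the iteration must instead be anchored at $\beta_0=2^*/(2m')$, so that $u\,u_M^{\beta_0-1}\le u^{\beta_0}\in L^{2m'}(\Rn)$ follows from $u\in L^{2^*}(\Rn)$. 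With that fix the scheme closes, since $m=\f{q+1}{p-1}>\f{N}{2s}$ is exactly the standing hypothesis.

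The decay estimates, however, contain genuine gaps. For the upper bound, the Riesz-potential bootstrap needs an \emph{initial algebraic} decay rate, not merely $u(x)\to 0$: writing $u\le C|x|^{-a}$ and feeding this back through the kernel improves the exponent only when $ap-2s>a$, i.e.\ $a>\f{2s}{p-1}$, and for $p=2^*-1$ this threshold equals $\f{N-2s}{2}$ --- precisely the borderline rate that $L^{2^*}$ membership would give for a radial decreasing function, and $u$ is not known to be radial at this stage. So the bootstrap has no starting point in the critical case. (There is also the unaddressed question of whether $u=I_{2s}(u^p-u^q)$ holds at all, since $u^p$ need not lie in $L^1(\Rn)$ when $p<2^*$, so even the first piece of your splitting does not obviously produce the factor $|x|^{-(N-2s)}$.) The lower bound argument is worse: $f=u^p-u^q$ is negative on $\{u>1\}$, so you cannot bound the signed Riesz potential from below by restricting the integral to a ball where $f>0$; the discarded negative contribution is itself of order $|x|^{-(N-2s)}$ for large $|x|$ and can cancel the positive one. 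The paper avoids both problems with a single device, the Kelvin transform $\tilde u(x)=|x|^{-(N-2s)}u(x/|x|^2)$: because $p\ge 2^*-1$ the transformed equation satisfies $(-\De)^s\tilde u\le\tilde u^p$ near the origin, Moser iteration gives $\tilde u\in L^{\infty}$ there (which is the upper bound in \eqref{sol-est}), and Schauder regularity plus positivity then give $\tilde u\ge C_1>0$ near the origin (which is the lower bound). You should either adopt this argument or supply an independent source of initial algebraic decay for the upper bound and a Harnack-type or maximum-principle step for the lower bound.
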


 \begin{theorem}\lab{t:reg-1}
Let $s, p, q$ are as in Theorem \ref{t:est}.\\
(i) If $u$ is a weak solution of Eq. \eqref{entire}, then $u\in C^{\infty}(\Rn)$  if both $p$ and $q$ are integer and $u\in C^{2ks+2s}(\Rn)$, where $k$ is the largest integer satisfying   $\lfloor 2ks\rfloor<p$  if $p\not\in\N$ and $\lfloor 2ks\rfloor<q$ if $p\in\N$ but $q\not\in\N$, where $\lfloor 2ks\rfloor$ denotes the greatest integer less than equal to $2ks$ .

\noi(ii) If $u$ is a weak solution of Eq.\eqref{eq:a3'}, then $u\in C^{s}(\Rn)\cap C^{2s+\al}_{loc}(\Om)$, for some $\al\in(0,1)$.  
 \end{theorem}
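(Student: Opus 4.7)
The plan is to bootstrap regularity starting from the $L^{\infty}$ bound furnished by Theorem \ref{t:est} and iteratively applying the Schauder-type estimates of Ros-Oton and Serra \cite{RS1} for the fractional Laplacian. Writing $f(t) = t^p - t^q$, Theorem \ref{t:est} already gives $f(u) \in L^{\infty}(\Rn)$. The key estimate I would invoke repeatedly is that if $(-\De)^s v = g$ in $\Rn$ (or on an open subset, for interior estimates) and $g \in C^{m,\alpha}$ with $m + 2s + \alpha \notin \N$, then $v \in C^{m+2s+\alpha}$, together with the endpoint statement: $g \in L^{\infty}$ yields $v \in C^{\beta}$ for every $\beta < \min\{1, 2s\}$.

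For part (i), I would iterate: from $u \in C^{\beta}(\Rn)$ with $u$ uniformly bounded, the composition $f(u)$ inherits Hölder regularity via the chain rule, and then Schauder lifts the regularity by $2s$. If both $p$ and $q$ are positive integers, then $t \mapsto t^p - t^q$ is $C^{\infty}$ on $[0, \|u\|_{\infty}]$ and composition preserves (in fact improves) Hölder regularity at every stage; the iteration never stalls and delivers $u \in C^{\infty}(\Rn)$. If $p$ is not an integer, the composition $t \mapsto t^p$ on $[0, \|u\|_{\infty}]$ has limited regularity because of its behaviour at $t = 0$: it lies in $C^{\lfloor p\rfloor, p - \lfloor p\rfloor}$ globally on that interval. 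Consequently, after iterating the Schauder step $k$ times, we can only hope to obtain $u \in C^{2ks + 2s}$ provided the intermediate exponent $2ks$ does not exceed the regularity of $t \mapsto t^p$; this is precisely the constraint $\lfloor 2ks \rfloor < p$. The largest such $k$ yields the stated terminal regularity, and the case $p \in \N$ but $q \notin \N$ is the same argument with $q$ replacing $p$ as the regularity-limiting exponent.

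For part (ii), the interior regularity $u \in C^{2s+\alpha}_{loc}(\Om)$ follows from the same iteration carried out on compactly contained subdomains via the interior Schauder estimates of \cite{RS1}. The global $C^s(\Rn)$ regularity follows from the boundary regularity theorem of \cite{RS1}: for bounded solutions of $(-\De)^s v = g$ in a bounded $C^{1,1}$ domain with zero Dirichlet data outside and $g \in L^{\infty}$, one has $v \in C^s(\Rn)$. Applying this with $v = u$ and $g = u^p - u^q \in L^{\infty}(\Om)$ (by Theorem \ref{t:est}) gives $u \in C^s(\Rn)$, and starting the interior bootstrap from $C^s$ regularity in the interior yields $C^{2s+\alpha}_{loc}(\Om)$ for some small $\alpha \in (0,1)$.

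The main obstacle I anticipate is the careful bookkeeping at threshold indices, namely ensuring that at each iteration the intermediate Hölder exponent $m + 2s + \alpha$ avoids integer values (so that the Schauder statement applies cleanly), and tracking the composition regularity of $t \mapsto t^p$ uniformly down to $t = 0$, where $u$ can come arbitrarily close in view of the decay \eqref{sol-est}. Both issues are addressed by taking $\alpha$ slightly below its critical value at each iteration step and passing to the limit only at the end of the bootstrap chain.
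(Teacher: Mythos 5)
Your proposal follows essentially the same route as the paper: start from the $L^{\infty}$ bound of Theorem \ref{t:est}, apply the Ros-Oton--Serra estimates (Theorem \ref{t:RS}(a) to get $C^{2s}$, then (b) iteratively to gain $2s$ per step, using translation invariance for uniform global bounds), with the iteration terminating exactly when the H\"older regularity of $t\mapsto t^p$ (or $t^q$) at $t=0$ is exhausted, and for part (ii) combining boundary regularity for the Dirichlet problem with the interior Schauder bootstrap. The only correction is one of attribution: the global $C^s(\Rn)$ regularity for the bounded-domain problem comes from \cite[Proposition 1.1]{RS4} (regularity up to the boundary for the Dirichlet problem), not from \cite{RS1}.
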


\begin{theorem}\lab{t:grad-est}   Let $s, p, q$ are as in Theorem \ref{t:est}.   If $u$ is a solution of Eq.\eqref{entire}, then 
\be\lab{grad-est}
|\na u(x)|\leq C|x|^{-(N-2s+1)}, \quad |x|>R',
\ee
for some positive constants $C$ and $R'$.
\end{theorem}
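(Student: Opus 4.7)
The plan is to reduce the decay estimate at infinity to a local regularity question at the origin via the Kelvin transform. Define
\[ u^*(y) := |y|^{-(N-2s)}\, u\!\left(\tfrac{y}{|y|^2}\right), \qquad y \in \Rn \setminus \{0\}. \]
The standard identity $(-\De)^s u^*(y) = |y|^{-(N+2s)}\bigl((-\De)^s u\bigr)(y/|y|^2)$, combined with Eq.\eqref{entire} and $u(y/|y|^2) = |y|^{N-2s} u^*(y)$, yields
\[ (-\De)^s u^*(y) = |y|^{a}\,(u^*)^p(y) - |y|^{b}\,(u^*)^q(y), \]
where $a := p(N-2s)-(N+2s) \geq 0$ and $b := q(N-2s)-(N+2s) > 0$ because $p\geq 2^*-1$ and $q>p$. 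In particular this right-hand side has no singularity at the origin provided $u^*$ is bounded there.

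The next step is to show that $u^*$ is locally $C^1$ at the origin. By Theorem \ref{t:est}, $u(z) \leq C_2|z|^{-(N-2s)}$ for $|z| \geq R_0$, so $u^*(y) \leq C_2$ for $|y| \leq 1/R_0$; combined with the global $L^\infty$ bound on $u$, this yields $u^* \in L^\infty(\Rn)$. Starting from this $L^\infty$ bound, a short bootstrap using the interior Schauder estimates of \cite{RS1} (as already deployed in the proof of Theorem \ref{t:reg-1}) applies: a low-regularity step first delivers $u^* \in C^{2s-\eps}$ near $0$; once $u^*$ is H\"older continuous, the weighted nonlinearity $|y|^a(u^*)^p - |y|^b(u^*)^q$ is H\"older on $B_\rho(0)$ for some $\rho>0$ (using $p,q \geq 1$ and $a,b \geq 0$); iterating one reaches $u^* \in C^{2s+\al'}(B_\rho(0))$ for some $\al'>0$, so in particular $\nabla u^*$ is bounded on $B_{\rho/2}(0)$.

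Finally, transfer the information back to $u$. The Kelvin transform is involutive, so $u(x) = |x|^{-(N-2s)} u^*(x/|x|^2)$ for $|x|>R_0$. Using $|D_x(x/|x|^2)| \leq C|x|^{-2}$ together with the chain rule,
\[ |\nabla u(x)| \leq (N-2s)|x|^{-(N-2s+1)} \bigl|u^*(x/|x|^2)\bigr| + C|x|^{-(N-2s+2)} \bigl|\nabla u^*(x/|x|^2)\bigr|. \]
Choosing $R'$ large enough that $x/|x|^2 \in B_{\rho/2}(0)$ whenever $|x|>R'$, both factors evaluated at $x/|x|^2$ are bounded. The first summand is $O(|x|^{-(N-2s+1)})$ and the second is the lower-order $O(|x|^{-(N-2s+2)})$, which yields \eqref{grad-est}.

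The main obstacle is the regularity bootstrap for $u^*$ at the origin in the second paragraph. One has to close a fixed-point argument in which the regularity of the weighted nonlinearity $|y|^a(u^*)^p - |y|^b(u^*)^q$ depends on the as-yet-unknown regularity of $u^*$ itself, and the weights $|y|^a, |y|^b$ with possibly non-integer exponents $a, b$ are only H\"older at $0$. The argument is initiated by the low-regularity Schauder estimate that needs only the $L^\infty$ bound on the right-hand side; after this initial H\"older gain, subsequent iterations proceed in the standard way up to $C^{2s+\al'}$, which is precisely what is required to bound $\nabla u^*$.
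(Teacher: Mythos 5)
Your route---Kelvin transform followed by a regularity bootstrap for $u^*$ at the origin---is genuinely different from the paper's proof, which instead sets $v(x)=R^{N-2s}u(Rx)$, observes that $v$ solves $(-\De)^s v=R^{N-p(N-2s)}v^p-R^{N-q(N-2s)}v^q$ with data bounded uniformly in $R$ on the annulus $\{1<|x|<2\}$ (thanks to the decay estimate of Theorem \ref{t:est}), and runs a localized Schauder bootstrap there to get $\|\na v\|_{L^{\infty}(\{1<|x|<2\})}\leq C$ uniformly in $R$, which unscales to \eqref{grad-est}. The advantage of working on annuli is precisely that no singular weight ever appears. Your argument is fine when $s\geq\f{1}{2}$ (then already the first Schauder step gives $u^*\in C^{2s}$ near $0$ with $2s\ge 1$, or one further step with any positive H\"older gain suffices), but it has a genuine gap for $s<\f{1}{2}$.

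The gap is the step ``iterating one reaches $u^*\in C^{2s+\al'}(B_\rho(0))$ for some $\al'>0$, so in particular $\na u^*$ is bounded.'' In the paper's convention $C^{2s+\al'}$ with $2s+\al'<1$ means $C^{0,2s+\al'}$, which carries no information about $\na u^*$; you need to reach $2s+\al'>1$. The Schauder estimate of Theorem \ref{t:RS}(b) gives $u^*\in C^{\al+2s}$ only when the right-hand side is $C^{\al}$, and near $y=0$ the H\"older exponent of $|y|^a(u^*)^p-|y|^b(u^*)^q$ is capped by the exponents of the weights: for $0<c<1$ the function $|y|^c$ lies in $C^{0,c}$ and in no better class at the origin, no matter how smooth $u^*$ turns out to be. Hence the iteration stalls at $C^{2s+\min(a,b,1)}$ (at $C^{2s+\min(b,1)}$ when $p=2^*-1$, since then $a=0$). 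Under the hypotheses of Theorem \ref{t:est} both caps can be arbitrarily small: for $p=2^*-1$ the condition $q>(p-1)\f{N}{2s}-1$ reduces to $q>p$, so $b=q(N-2s)-(N+2s)$ can be less than $1-2s$; for $p$ slightly above $2^*-1$ one has $0<a<1-2s$. In these regimes your bootstrap terminates strictly below $C^1$ and the final chain-rule step has nothing to feed on. (A secondary point: Theorem \ref{t:RS}(b) requires $u^*\in C^{\al}(\Rn)$ globally, so even the admissible steps need a cutoff argument of the type the paper performs with $\eta v$ and \cite[Proposition 2.3, Corollary 2.4]{RS4}.) To repair the proof for all $s\in(0,1)$ you would have to avoid differentiating at the singular point of the transformed equation---which is exactly what the paper's annulus/rescaling argument does.
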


\begin{theorem}\label{t:poho} 
Let $s\in(0,1)$ and $p=2^{*}-1$ and $q>p$. Then \eqref{entire} does not have any solution.
\end{theorem}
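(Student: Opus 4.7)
The plan is to derive a Pohozaev-type identity for \eqref{entire} on $\Rn$, combine it with the Nehari identity, and exploit their incompatibility at the critical exponent. Since $u \in \dot{H}^s(\Rn)$ is an admissible test function in Definition \ref{def-1}, and since $u \in L^{p+1}(\Rn)$ by interpolation between the Sobolev embedding $\dot{H}^s(\Rn) \hookrightarrow L^{2^*}(\Rn)$ and $L^{q+1}(\Rn)$ (noting $p+1 \in [2^*, q+1]$ as $p \geq 2^*-1$ and $p < q$), testing against $\va = u$ gives the Nehari identity
\begin{equation*}
\int_{\Rn}\!\int_{\Rn}\f{|u(x)-u(y)|^2}{|x-y|^{N+2s}}\, dx\, dy \;=\; \int_{\Rn} u^{p+1}\, dx - \int_{\Rn} u^{q+1}\, dx.
\end{equation*}

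For the Pohozaev identity I would (formally) test against $\va = x \cdot \na u$. On the polynomial side, integration by parts yields $\int_{\Rn}(u^p - u^q)(x \cdot \na u)\, dx = -\f{N}{p+1}\int u^{p+1} + \f{N}{q+1}\int u^{q+1}$, while on the nonlocal side, differentiating the scaling identity $\|u_\la\|^2_{\dot{H}^s(\Rn)} = \la^{2s-N}\|u\|^2_{\dot{H}^s(\Rn)}$ (for $u_\la(x) := u(\la x)$) at $\la = 1$ produces the Pohozaev identity
\begin{equation*}
\f{2s-N}{2}\int_{\Rn}\!\int_{\Rn}\f{|u(x)-u(y)|^2}{|x-y|^{N+2s}}\, dx\, dy \;=\; -\f{N}{p+1}\int_{\Rn} u^{p+1}\, dx + \f{N}{q+1}\int_{\Rn} u^{q+1}\, dx.
\end{equation*}
To make this rigorous I would introduce a radial cutoff $\eta_R \in C^\infty_c(\Rn)$ with $\eta_R \equiv 1$ on $B_R$, test the weak formulation against $\eta_R(x \cdot \na u)$, and pass to the limit $R \to \infty$. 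The decay estimates from Theorem \ref{t:est} and Theorem \ref{t:grad-est}, namely $u(x) \leq C|x|^{-(N-2s)}$ and $|\na u(x)| \leq C|x|^{-(N-2s+1)}$ for large $|x|$, yield $|u^p(x \cdot \na u)| \leq C|x|^{-(p+1)(N-2s)}$, which is integrable at infinity since $(p+1)(N-2s) \geq 2N$ when $p \geq 2^*-1$; analogous bounds handle the $u^q$ contribution and the Gagliardo double-integral remainder, so all cutoff error terms vanish as $R \to \infty$.

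Substituting the Nehari identity into the Pohozaev identity produces
\begin{equation*}
\left(\f{N}{p+1} - \f{N-2s}{2}\right)\int_{\Rn} u^{p+1}\, dx \;=\; \left(\f{N}{q+1} - \f{N-2s}{2}\right)\int_{\Rn} u^{q+1}\, dx.
\end{equation*}
At $p = \f{N+2s}{N-2s}$ the left-hand coefficient vanishes, and since $q > p$ forces $q+1 > 2^*$ and hence $\f{N}{q+1} < \f{N-2s}{2}$, the right-hand coefficient is strictly negative; consequently $\int_{\Rn} u^{q+1}\, dx = 0$, contradicting $u > 0$. The principal obstacle is the rigorous passage from the formal Pohozaev calculation to a genuine integral identity on all of $\Rn$: unlike the bounded-domain analysis of Ros-Oton and Serra \cite{RS2}, there is no boundary to absorb error terms, so every step of the cutoff argument depends crucially on the sharp pointwise decay of $u$ and $\na u$ established in Theorems \ref{t:est} and \ref{t:grad-est}.
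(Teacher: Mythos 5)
Your proposal is correct and follows the same overall strategy as the paper: the Nehari identity, a Pohozaev identity coming from the scaling of the $\dot{H}^s$-seminorm, the decay estimates of Theorems \ref{t:est} and \ref{t:grad-est} to justify $\int_{\Rn}(u^p-u^q)(x\cdot\na u)\,dx=-\f{N}{p+1}\int_{\Rn}u^{p+1}dx+\f{N}{q+1}\int_{\Rn}u^{q+1}dx$, and the sign argument at $p=2^*-1$ are all exactly the paper's. The only real divergence is in the step you yourself flag as the principal obstacle, namely making the nonlocal side rigorous. You propose testing against $\eta_R(x\cdot\na u)$ and sending $R\to\infty$, which forces you to control commutator-type error terms inside the Gagliardo double integral, not just in the local integrals; this is doable but delicate. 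The paper avoids the issue entirely: it tests against the dilations $u_\la(x)=u(\la x)$, which are admissible test functions in $\dot{H}^s(\Rn)$ for every $\la>0$, computes $\int_{\Rn}(-\De)^{s/2}u\,(-\De)^{s/2}u_\la\,dx=\la^{-\f{N-2s}{2}}\int_{\Rn}w_{\sqrt{\la}}w_{1/\sqrt{\la}}\,dy$ with $w=(-\De)^{s/2}u$ by an exact change of variables, and then differentiates this one-parameter family of identities at $\la=1$; the invariance of $\int w_{\sqrt{\la}}w_{1/\sqrt{\la}}$ under $\la\mapsto 1/\la$ kills the cross term and yields $-\f{N-2s}{2}\|u\|^2_{\dot{H}^s(\Rn)}$ directly, so the only analytic justification required is differentiating $\int_{\Rn}(u^p-u^q)u_\la\,dx$ under the integral sign, which your decay estimates handle. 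Your alternative phrasing via $\|u_\la\|^2_{\dot{H}^s}=\la^{2s-N}\|u\|^2_{\dot{H}^s}$ gives the same formal identity, but to connect it to the equation you still need $x\cdot\na u$ (or its truncation) as a test function, which is precisely what the dilation trick circumvents. Everything else, including the integrability check $(p+1)(N-2s)\geq 2N$ and the sign of $\f{N}{q+1}-\f{N-2s}{2}$, is correct.
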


\vspace{3mm}

 We define the functional
\be\label{a22} F(v, \Om)= \frac{1}{2}\displaystyle\int_{\R^N}\int_{\Rn}\frac{|u(x)-u(y)|^2}{|x-y|^{N+2s}} dxdy+\frac{1}{q+1}\displaystyle\int_{\Om}|v|^{q+1} dx. \ee

Define,
\be\lab{K}
\mathcal{K}:= \inf\bigg\{F(v, \Rn): v\in \dot{H}^s(\Rn)\cap L^{q+1}(\Rn), \int_{\Rn} |v|^{p+1}dx=1\bigg\}.
\ee

\begin{theorem}\label{1.2}
Let $s\in(0,1)$ and $q>p> 2^*-1$.  Then $\mathcal{K}$ in \eqref{K} is achieved by a radially decreasing function  $u\in \dot{H}^s(\R^N) \cap L^{q+1}(\R^N)$ and Eq.\eqref{entire} admits a nonnegative solution. Furthermore, if $q>(p-1)\f{N}{2s}-1$, then Eq. \eqref{entire} admits a positive solution.

\end{theorem}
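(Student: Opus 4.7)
The plan is to apply the direct method of the calculus of variations to the constrained minimization \eqref{K}. Since $p+1>2^{*}$, the embedding $\dot{H}^{s}(\Rn)\hookrightarrow L^{p+1}(\Rn)$ fails, so one cannot pass to the limit in the constraint by standard compactness; the key will be to exploit radiality to recover $L^{p+1}$-compactness. Take a minimizing sequence $\{u_n\}\subset \dot H^s(\Rn)\cap L^{q+1}(\Rn)$ with $\int_{\Rn}|u_n|^{p+1}dx=1$ and $F(u_n,\Rn)\to\mathcal{K}$. Replacing each $u_n$ by its Schwarz symmetric decreasing rearrangement and using the P\'olya--Szeg\H{o} inequality for the fractional Laplacian (Almgren--Lieb) together with the invariance of $L^r$-norms under rearrangement, I may assume that each $u_n$ is nonnegative, radial, and radially non-increasing, with the constraint and $F(u_n,\Rn)\to\mathcal{K}$ preserved.

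Since $\{u_n\}$ is bounded in $\dot H^s(\Rn)\cap L^{q+1}(\Rn)$ (and hence in $L^{2^*}(\Rn)$ by the Sobolev embedding), up to a subsequence $u_n\rightharpoonup u$ in each of $\dot H^s$, $L^{q+1}$ and $L^{2^*}$, with $u\ge 0$ radial and non-increasing, and $u_n\to u$ a.e.\ in $\Rn$ by Rellich-type compactness on balls. By weak lower semicontinuity,
\[
F(u,\Rn)\le\liminf_n F(u_n,\Rn)=\mathcal{K},
\]
so the entire problem reduces to showing that $\int_{\Rn}|u|^{p+1}dx=1$.

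The hard part is passing to the limit in this supercritical constraint. The key observation is that for any nonnegative, radial, radially non-increasing $v$, monotonicity immediately yields the pointwise estimates
\[
v(x)\le C_1\,|x|^{-N/(q+1)}\,\|v\|_{L^{q+1}},\qquad v(x)\le C_2\,|x|^{-(N-2s)/2}\,\|v\|_{L^{2^*}}.
\]
Applying the first bound to the $u_n$'s yields, since $p<q$,
\[
\int_{|x|\le\delta}|u_n|^{p+1}dx\le C\,\delta^{N(q-p)/(q+1)},
\]
which tends to $0$ uniformly in $n$ as $\delta\to 0$; the second bound yields, since $p+1>2^*$,
\[
\int_{|x|\ge R}|u_n|^{p+1}dx\le C\,R^{N-(p+1)(N-2s)/2},
\]
which tends to $0$ uniformly in $n$ as $R\to\infty$. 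Both bounds also hold for $u$. On the annulus $\delta\le|x|\le R$ the estimates give a uniform $L^\infty$ bound on $u_n$ and $u$, so dominated convergence together with $u_n\to u$ a.e.\ yields $u_n\to u$ in $L^{p+1}(\{\delta\le|x|\le R\})$. Combining the three regions and letting $\delta\to 0$, $R\to\infty$ produces $u_n\to u$ strongly in $L^{p+1}(\Rn)$, hence $\int_{\Rn}|u|^{p+1}dx=1$ and $u$ attains $\mathcal{K}$.

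A Lagrange multiplier argument then yields $\lambda\in\R$ with
\[
(-\De)^s u+u^q=\lambda(p+1)u^p\quad\text{in }\Rn,
\]
and testing against $u$ gives $\lambda(p+1)=\|u\|_{\dot H^s}^2+\|u\|_{L^{q+1}}^{q+1}>0$. A scaling $v(x):=\mu\,u(\alpha x)$ with $\mu^{q-p}=[\lambda(p+1)]^{-1}$ and $\alpha^{2s}=\mu^{q-1}$ converts this into $(-\De)^s v=v^p-v^q$, producing the desired nonnegative solution of \eqref{entire}. Finally, when $q>(p-1)N/(2s)-1$, Theorems \ref{t:est} and \ref{t:reg-1} furnish boundedness, continuity and the classical-solution property of $v$. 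If $v(x_0)=0$ for some $x_0$, then the integral representation \eqref{De-u} together with $v\ge 0$ forces $(-\De)^s v(x_0)\le 0$ with equality only if $v\equiv 0$, while the equation forces equality; this contradicts $v\not\equiv 0$. Hence $v>0$ in $\Rn$, completing the proof. The main obstacle I anticipate is the compactness step in the supercritical regime, which is precisely what the radial decay estimates are designed to overcome.
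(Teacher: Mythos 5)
Your proposal is correct and follows essentially the same route as the paper: Schwarz rearrangement plus the radial decay lemma to recover $L^{p+1}$-compactness of the minimizing sequence, weak lower semicontinuity, a Lagrange multiplier and rescaling, and the pointwise integral representation of $(-\Delta)^s$ for strict positivity. The only (harmless) variation is near the origin, where you use the pointwise bound $v(x)\le C|x|^{-N/(q+1)}\norm{v}_{L^{q+1}}$ from radial monotonicity, whereas the paper invokes Vitali's theorem via the uniform $L^{q+1}$ bound; both close that step.
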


\vspace{3mm}

When $\Om$ is a smooth bounded domain, we define
\be\lab{S-Om} S_{\Om}:= \inf\bigg\{F(v, \Om): v\in X_0\cap L^{q+1}(\Om), \int_{\Om} |v|^{p+1}dx=1\bigg\}.
\ee

\begin{theorem}\label{1.3}
 Let $s\in(0,1)$ and $q>p\geq 2^*-1$.  Then $\mathcal{S}_{\Om}$ in \eqref{S-Om} is achieved by a function  $u\in X_0 \cap L^{q+1}(\Om)$.   Furthermore, there exists a constant $\lambda>0$, such that $u$ satisfies
\begin{equation}
  \label{a4}
\left\{\begin{aligned}
      (-\Delta)^s u &=\la |u|^{p-1}u -|u|^{q-1}u \quad\text{in }\quad \Om,\\
       u&=0 \quad\text{in}\quad\Rn\setminus\Om.
         \end{aligned}
  \right.
\end{equation}
Furthermore, if $p\geq 2^*-1$ and $q>(p-1)\f{N}{2s}-1$, then Eq.\eqref{a4} admits a positive solution.

Note that the scaled function $U=\la^\f{1}{p-1}u$ satisfies the equation 
\be\lab{eq:U}
(-\De)^s U=U^{p}-c^*U^{q}, \quad c^*=\la^{-\f{q-1}{p-1}}. 
\ee
\end{theorem}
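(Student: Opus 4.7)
The plan is to apply the direct method of the calculus of variations, with the supercritical exponent handled via an interpolation trick. Take a minimizing sequence $\{v_n\}\subset X_0\cap L^{q+1}(\Om)$ for $S_\Om$ normalized by $\int_\Om |v_n|^{p+1}\,dx=1$. Since $||v_n(x)|-|v_n(y)||\leq |v_n(x)-v_n(y)|$ shows $F(|v_n|,\Om)\le F(v_n,\Om)$ while $\||v_n|\|_{L^{p+1}}=\|v_n\|_{L^{p+1}}$, one may assume $v_n\ge 0$. The boundedness of $F(v_n,\Om)$ immediately gives uniform bounds on $\|v_n\|_{X_0}$ and $\|v_n\|_{L^{q+1}(\Om)}$, so, up to subsequence, $v_n\rightharpoonup u$ weakly in $X_0$ and in $L^{q+1}(\Om)$, and $v_n\to u$ a.e.

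The crucial compactness step is recovering strong convergence in $L^{p+1}$ despite $p+1\ge 2^*$. Fix any $r\in[1,2^*)$ with $r<p+1<q+1$; the compact embedding $X_0\hookrightarrow L^r(\Rn)$ recalled in the introduction gives $v_n\to u$ in $L^r(\Om)$. Since $1/(p+1)=\theta/r+(1-\theta)/(q+1)$ for some $\theta\in(0,1)$, Hölder's inequality yields
$$\|v_n-u\|_{L^{p+1}(\Om)}\leq \|v_n-u\|_{L^r(\Om)}^{\theta}\,\|v_n-u\|_{L^{q+1}(\Om)}^{1-\theta}\To 0,$$
so $\int_\Om |u|^{p+1}\,dx=1$ and $u$ is admissible. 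Weak lower semicontinuity of $\|\cdot\|_{X_0}$ together with Fatou's lemma applied to $|v_n|^{q+1}$ (using pointwise convergence) give $F(u,\Om)\le\liminf F(v_n,\Om)=S_\Om$, so $u$ attains $S_\Om$.

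By a standard Lagrange multiplier argument applied to the constrained minimization (computing $\frac{d}{dt}\bigr|_{t=0}F(u+t\va,\Om)$ against the constraint $\int |u+t\va|^{p+1}=1$ for every $\va\in X_0$, which is admissible since $\Om$ is bounded and $L^{q+1}(\Om)\hookrightarrow L^{p+1}(\Om)$), there exists $\la\in\R$ such that
$$\<u,\va\>_{X_0}+\int_\Om |u|^{q-1}u\,\va\,dx=\la\int_\Om |u|^{p-1}u\,\va\,dx\quad\forall\,\va\in X_0,$$
which is precisely the weak form of \eqref{a4}. Testing with $\va=u$ gives $\la=\|u\|_{X_0}^2+\|u\|_{L^{q+1}}^{q+1}>0$ since $u\nequiv 0$.

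For the final claim, under the extra hypothesis $q>(p-1)\f{N}{2s}-1$ one can rescale by $U=\la^{1/(p-1)}u$ to reduce to \eqref{eq:U} and invoke Theorem \ref{t:est} to conclude $u\in L^{\infty}(\Om)$. Since $u\ge 0$ and $u\nequiv 0$, writing the equation as $(-\De)^s u+|u|^{q-1}u=\la u^p$ with bounded right-hand side, a strong maximum principle for the fractional Laplacian forces $u>0$ in $\Om$. The main obstacle is the compactness step: showing that the supercritical constraint $\int |v_n|^{p+1}=1$ passes to the limit. Once the interpolation inequality above is set up correctly (which relies essentially on $p<q$ and on $\Om$ being bounded), the rest of the argument is routine variational analysis followed by a regularity-plus-maximum-principle bootstrap.
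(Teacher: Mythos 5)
Your proposal is correct and follows essentially the same route as the paper: interpolation between the compactly embedded $L^r$ ($r<2^*$; the paper uses $r=2$) and the bounded $L^{q+1}$ norm to pass the supercritical constraint to the limit, weak lower semicontinuity plus Fatou, a Lagrange multiplier, and then regularity followed by the pointwise maximum-principle argument for strict positivity. The only (harmless) difference is that you obtain a nonnegative minimizer by replacing the minimizing sequence with $|v_n|$, whereas the paper re-runs the minimization with $(u^+)$ inserted in the functional and the constraint and then tests the resulting Euler--Lagrange equation with $u^-$.
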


We organise the paper as follows. In section 2, we recall equivalent formulation of \eqref{entire}  by the Caffarelli-Silvestre \cite{CS} associated extension problem-a local PDE in $\mathbb{R}^{N+1}_+$ and we also recall Schauder estimate for the nonlocal equation proved by Ros-Oton and Serra \cite{RS1}. In Section 3, we establish $u\in L^{\infty}(\Rn)$, decay estimate of solution and the gradient of solution at infinity. Section 4 deals with the proof of nonexistence result in $\Rn$ when $p=2^*-1$. In section 5, we show that any solution of \eqref{entire} is radially symmetric and strictly decreasing about some point in $\Rn$. While in section 6, we prove existence of solution to \eqref{entire} for $p>2^*-1$ and to \eqref{eq:a3'} when $\Om$ is bounded and $p\geq 2^*-1$.

\vspace{3mm}

{\bf Notations:} Throughout this paper we use the notation $C^{\ba}(\Rn)$, with $\ba>0$ to refer the space $C^{k, \ba'}(\Rn)$, where $k$ is the greatest integer such that 
$k<\ba$ and $\ba'=\ba-k$. According to this, $[.]_{C^{\ba}(\Rn)}$ denotes the following seminorm 
$$[u]_{C^{\ba}(\Rn)}=[u]_{C^{k, \ba'}(\Rn)}=\sup_{x,\ y\in\Rn, x\not=y}\f{|D^ku(x)-D^ku(y)|}{|x-y|^{\ba'}}.$$
Throughout this paper, $C$  denotes the generic constant, which may vary from line to line and ${\bf n}$ denotes the unit outward normal.

\section{Preliminaries}
In this section we recall the other useful representation of fractional laplacian $(-\De)^s$, which we will use to prove decay estimate of solution at infinity. Using the celebrated  Caffarelli and Silvestre extension method, (see \cite{CS}), fractional laplacian $(-\De)^s$ can be seen as a trace class operator (see \cite{CS, GS, BCPS}) .
Let $u \in \dot{H}^s(\Rn)$ be a solution of \eqref{entire}.  Define $w:=E_{s}(u)$ be its $s-$ harmonic extension to the upper half space $\R^{N+1}_+, $
that is, there is a solution to the following problem:
\begin{align}\lab{A-41}
 \begin{cases}
  \mbox{div}(y^{1-2s} \na w)=0  &\quad\mbox{in}\quad \R^{N+1}_+,\\
  w=u &\quad\mbox{on}\quad\mathbb{R}^N\times \{y=0\}.
 \end{cases}
\end{align}

Define the space  
$X^{2s} (\R_+^{N+1}):=$ closure of $C_0^\infty(\overline{\R_+^{N+1}})$ w.r.t. the following norm 
$$\norm{w}_{2s}=\norm{w}_{X^{2s} (\R_+^{N+1})}:=\bigg(k_{2s}\int_{\R_+^{N+1}}y^{1-2s}|\bigtriangledown w|^2dxdy\bigg)^\frac{1}{2},$$
where $k_{2s}=\frac{\Ga(s)}{2^{1-2s}\Ga(1-s)}$ is a normalizing constant, chosen in such a way that the 
extension operator 
$E_{s}:\dot{H}^{s}(\Rn) \To X^{2s} (\R_+^{N+1})$ is an isometry (up to constants), that is, 
$\norm{E_{s} u}_{2s}=\norm{u}_{\dot{H}^{s}(\Rn)}=|\left(-\Delta\right)^{s} u|_{L^2(\Rn)}. $ (see \cite{DMV}).
Conversely, for a function $w \in X^{2s} (\R_+^{N+1})$, we denote its trace on $\Rn \times \{y=0\}$ as:
$$\mbox{Tr}(w):=w(x,0). $$
This trace operator satisfies:
\begin{align}\lab{tr-ineq}
\norm{w(.,0)}_{\dot{H}^{s}(\Rn)}=\norm{Tr(w)}_{\dot{H}^{s}(\Rn)} \leq \norm{w}_{2s}.
\end{align}
Consequently,
\be\lab{tr-ineq1}\bigg(\int_{\Rn}|u(x)|^{2^*}dx\bigg)^{\frac{2}{2^*}} \leq S(N,s)\int_{\R^{N+1}_+}y^{1-2s}|\na w(x,y)|^2dxdy.
\ee
Inequality \eqref{tr-ineq1} is called the trace inequality.
We  note that $H^1(\R^{N+1}_+,y^{1-2s})$, up to a normalizing factor, is isometric to $X^{2s} (\R_+^{N+1})$ (see \cite{GS}). 
In \cite{CS}, it is shown that $E_{s}(u)$ satisfies the following:
$$(-\De)^{s}u(x)=\frac{\pa w}{\pa \nu ^{2s} }:=-k_{2s} \lim_{y \to 0^+}y^{1-2s}\frac{\pa w}{\pa y}(x,y). $$
With this above representation,  $\eqref{A-41}$ can be rewritten as:
\begin{equation}\lab{A-42}
\left\{\begin{aligned}
      \text{div}(y^{1-2s} \na w) &=0  \quad\text{in}\quad \R^{N+1},\\
  \frac{\pa w}{\pa \nu ^{2s} } &=w^p(.,0)-w^{q}(.,0) \quad\text{on}\quad\mathbb{R}^N.
          \end{aligned}
  \right.
\end{equation}

A function $w \in X^{2s} (\R_+^{N+1})$ is said to be a weak solution to \eqref{A-42} if for all $\va \in X^{2s} (\R_+^{N+1}), $
we have
\begin{equation}\lab{A-43}
k_{2s} \int_{\R^{N+1}_+}y^{1-2s}\na w \na \va\ dxdy  =\int_{\Rn}w^p(x,0) \va (x,0)\ dx- \int_{\Rn}w^{q}(x,0) \va(x,0)\ dx.
\end{equation}
Note that for any weak solution $w \in X^{2s} (\R_+^{N+1})$ to \eqref{A-42}, the function 
$u:=\mbox{Tr}(w)=w(.,0)\in \dot{H}^{s}(\Rn)$ is a weak solution to \eqref{entire}. 

Next, we recall Schauder estimate for the nonlocal equation by Ros-Oton and Serra \cite{RS1}.
\begin{theorem}{[Ros-Oton and Serra, 2016]}\lab{t:RS}
Let $s\in(0,1)$ and $u$ be any bounded weak solution to
$$(-\De)^s u=f \quad\text{in}\quad B_1(0).$$
Then,\\
(a) If $u\in L^{\infty}(\Rn)$ and $f\in L^{\infty}(B_1(0))$,
$$||u||_{C^{2s}(B_{\f{1}{2}}(0))}\leq C(||u||_{L^{\infty}(\Rn)}+||f||_{L^{\infty}(B_1(0))}) \quad\text{if}\quad s\not=\f{1}{2}$$ 
and
$$||u||_{C^{2s-\eps}(B_{\f{1}{2}}(0))}\leq C(||u||_{L^{\infty}(\Rn)}+||f||_{L^{\infty}(B_1(0))})\quad\text{if}\quad s=\f{1}{2},$$
for all $\eps>0$.

(b) If $f\in C^{\al}(B_1(0))$ and $u\in C^{\al}(\Rn)$ for some $\al>0$, then
$$||u||_{C^{\al+2s}(B_{\f{1}{2}}(0))}\leq C(||u||_{C^{\al}(\Rn)}+||f||_{C^{\al}(B_1(0))}) ,$$
whenever $\al+2s$ is not an integer.
The constant $C$ depends only on $N, s, \al, \eps$.
\end{theorem}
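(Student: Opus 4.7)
My plan is a decomposition argument that isolates the effect of the source $f$ near $B_1$ from the nonlocal ``tail'' of $u$. Pick a cutoff $\eta \in C_c^\infty(B_{3/4})$ with $\eta \equiv 1$ on $B_{1/2}$, and write $u = v + w$, where
\[
v(x) := c_{N,s}\int_{\Rn}\frac{(\eta f)(y)}{|x-y|^{N-2s}}\,dy
\]
is the Riesz potential of the compactly supported source, so that $(-\De)^s v = \eta f$ on $\Rn$. Then $w := u - v$ satisfies $(-\De)^s w = 0$ in $B_{1/2}$, while a direct estimate of the Riesz kernel against a compactly supported bounded function gives $\|v\|_{L^\infty(\Rn)} \leq C\|f\|_{L^\infty(B_1)}$, whence $\|w\|_{L^\infty(\Rn)} \leq \|u\|_{L^\infty(\Rn)} + C\|f\|_{L^\infty(B_1)}$.

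For part (a) I combine two classical ingredients. First, for the Riesz potential $v$: if $g \in L^\infty(\Rn)$ is compactly supported and $2s \notin \N$, then $c_{N,s}\,I_{2s}(g) \in C^{2s}(\Rn)$ with norm bounded by $\|g\|_{L^\infty}$; the proof is a direct pointwise estimate of $v(x) - v(y)$, splitting the domain into the region near $x$ (where the kernel is singular but the set is small, contributing $|x-y|^{2s}$ by integrability of $|z|^{-(N-2s)}$) and the region far from $x$ (where a mean-value bound on the kernel controls differences). In the borderline case $2s = 1$ this argument only delivers a Zygmund-type modulus of continuity, which forces the $\eps$-loss in the statement for $s=1/2$. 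Second, for the $s$-harmonic remainder $w$: any bounded solution of $(-\De)^s w = 0$ in $B_{3/4}$ is $C^\infty$ on $B_{1/2}$, with every seminorm controlled by $\|w\|_{L^\infty(\Rn)}$, via the explicit Poisson kernel for $(-\De)^s$ on a ball that recovers $w$ inside from its values outside. Adding the two bounds yields (a).

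Part (b) proceeds by the same decomposition, with the additional gain that $I_{2s}$ sends compactly supported $C^\alpha$ functions into $C^{\alpha+2s}(\Rn)$ when $\alpha + 2s \notin \N$ (the classical Schauder estimate for Riesz potentials), so $\|v\|_{C^{\alpha+2s}(\Rn)} \leq C\|\eta f\|_{C^\alpha(\Rn)} \leq C\|f\|_{C^\alpha(B_1)}$. The $s$-harmonic piece $w$ is still smooth on $B_{1/2}$ with estimates linear in $\|w\|_{L^\infty(\Rn)}$, which is in turn bounded by $\|u\|_{C^\alpha(\Rn)} + C\|f\|_{C^\alpha(B_1)}$. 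I expect the principal obstacle to be handling the nonlocal tail correctly: the interior estimate for $w$ must be stated so that only the unweighted norm of $u$ over $\Rn$ appears on the right-hand side, rather than a stronger weighted tail norm of the form $\int_{\Rn}|w(y)|(1+|y|)^{-(N+2s)}\,dy$, which requires care in bounding the Poisson representation. A secondary delicate point is the exclusion of integer values of $2s$ and $\alpha + 2s$; these are genuinely borderline and can only be captured in the Zygmund scale, which is why they are excluded from the statement and cost the $\eps$-loss at $s = 1/2$.
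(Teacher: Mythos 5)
The paper does not prove Theorem~\ref{t:RS} at all: it is quoted verbatim from Ros-Oton and Serra \cite{RS1}, so there is no internal proof against which to compare yours. As a proof of the statement for $(-\De)^s$, your decomposition argument is sound and is genuinely different from the route in \cite{RS1}. Ros-Oton and Serra establish this estimate for arbitrary translation-invariant stable integro-differential operators, and for that generality they argue by a blow-up/compactness scheme with incremental quotients, where no explicit fundamental solution or Poisson kernel is available; this is exactly why their hypothesis in part (b) asks for $u\in C^{\al}(\Rn)$ globally, so that difference quotients of $u$ can be treated as bounded data. Your argument exploits what is special to $(-\De)^s$: the Riesz potential $I_{2s}(\eta f)$ gives an explicit particular solution whose H\"older gain $\al\mapsto\al+2s$ off integers is classical, and the remainder $w=u-v$ is $s$-harmonic on a slightly smaller ball with $w$ bounded on $\Rn$, so its interior derivatives of every order are controlled by $\norm{w}_{L^\infty(\Rn)}$ through the explicit Poisson kernel for the ball. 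As you observe, this means only the $L^\infty$ tail of $u$ is needed, so your argument in fact yields a version of part (b) with $\norm{u}_{L^\infty(\Rn)}$ in place of $\norm{u}_{C^{\al}(\Rn)}$, which is slightly stronger than the stated result but, of course, does not cover the general stable operators of \cite{RS1}. Two details worth tightening: (i) with $\eta\equiv 1$ only on $B_{1/2}$, the remainder $w$ is $s$-harmonic precisely on $B_{1/2}$, so to get estimates \emph{up to} $\bar B_{1/2}$ you should take $\eta\equiv 1$ on a slightly larger ball, say $B_{5/8}$; (ii) the constant in front of the Riesz kernel must be the one that inverts $(-\De)^s$ (coming from $\Ga$-factors), not the $c_{N,s}$ of \eqref{De-u}, but this is only notation. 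For the $s=\f{1}{2}$ case the Zygmund loss you identify is exactly right: the third integral in the pointwise estimate of $v(x)-v(y)$ picks up a factor $\log(1/|x-y|)$ when $2s=1$, which is absorbed by the $\eps$-loss in the statement.
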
 
We conclude this section by recalling some weighted embedding results from Tan and Xiong \cite{TX}. For this, we introduce the following notations
$$Q_R=B_R\times [0, R)\subset\R^{N+1},$$
where $B_R$ is a ball in $\Rn$ with radius $R$ and centered at origin. Note that, $B_R\times\{0\}\subset Q_R$. We define,
$$H(Q_R, y^{1-2s}):=\displaystyle\bigg\{U\in H^1(Q_R): \ \int_{Q_R}y^{1-2s}(U^2+|\na U|^2)dxdy<\infty \bigg\}$$ and $X^{2s}_0(Q_R)$ is the closure of $C^{\infty}_0(Q_R)$ with respect to the norm $$||w||_{X^{2s}_0(Q_R)}=\bigg(\int_{Q_R}y^{1-2s}|\na w|^2dxdy\bigg)^\f{1}{2}.$$

We note that, $s \in (0,1)$ implies the weight $y^{1-2s}$ belongs to the Muckenhoupt class $A_2$ (see \cite{Muckenhoupt})
which consists of all non-negative functions $w$ on $\R^{N+1}$ satisfying for some constant $C$, the estimate
\begin{eqnarray*}
\sup_B\bigg(\frac{1}{|B|}\int_B wdx\bigg)\bigg(\frac{1}{|B|}\int_B w^{-1}dx\bigg) \leq C, 
\end{eqnarray*}
where the supremum is taken over all balls $B$ in $\R^{N+1}. $

\begin{lemma}\lab{l:embed-1}
Let $f\in X^{2s}_0(Q_R)$. Then there exists constant $C$ and $\delta >0$ depending only on $N$ and $s$ such that for any $1 \leq k \leq \frac{n+1}{n}+\delta, $
$$\bigg(\int_{Q_R}y^{1-2s}|f|^{2k}dxdy\bigg)^{\frac{1}{2k}} \leq C(R) \bigg(\int_{Q_R}y^{1-2s}|\na f|^2dxdy\bigg)^{\frac{1}{2}}.$$
\end{lemma}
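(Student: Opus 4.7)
The plan is to deduce this embedding as a direct application of the weighted Sobolev-Poincar\'e inequality for Muckenhoupt $A_2$ weights, which is precisely the route followed in Tan-Xiong \cite{TX}. The key prerequisite, already noted immediately above the statement, is that for every $s\in(0,1)$ the weight $w(x,y)=|y|^{1-2s}$, understood on $\Rn\times\R$ by even reflection across $\{y=0\}$, belongs to the class $A_2$. The classical theorem of Fabes-Kenig-Serapioni then guarantees a self-improving Sobolev inequality: there exists $\de=\de(N,s)>0$ such that for every ball $B\subset\R^{N+1}$ of radius $r_B$, every $g\in C^{\infty}_0(B)$, and every $k$ with $1\le k\le \frac{N+1}{N}+\de$,
$$
\bigg(\int_B w|g|^{2k}\,dxdy\bigg)^{1/(2k)} \le C\, r_B\,\bigg(\int_B w|\na g|^2\,dxdy\bigg)^{1/2}.
$$
The extra gain $\de$ in the exponent comes from the reverse H\"older self-improvement of $A_2$ weights and is the substantive content of the cited theory.

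Given this, I would transfer the inequality from a full Euclidean ball to the half-cylinder $Q_R=B_R\times[0,R)$. Since $f\in X^{2s}_0(Q_R)$ is by definition the limit in the weighted Dirichlet norm of functions in $C^{\infty}_0(Q_R)$, it vanishes on the lateral piece $\pa B_R\times[0,R)$ and on the top face $B_R\times\{R\}$. I would extend $f$ by zero across those faces and by even reflection in $y$ across $\{y=0\}$, obtaining a compactly supported function $\tilde f$ inside a Euclidean ball $B^*$ of radius comparable to $R$, centered at the origin. Because $w$ is invariant under the reflection and $\tilde f$ vanishes on the rest of $\pa B^*$, both the weighted $L^{2k}$ integral and the weighted Dirichlet energy of $\tilde f$ over $B^*$ are exactly twice the corresponding quantities of $f$ over $Q_R$. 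Applying the $A_2$-Sobolev inequality to $\tilde f$ on $B^*$ and cancelling the factor of $2$ yields the stated estimate, with $C(R)$ coming from the scaling factor $r_{B^*}\sim R$.

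The main obstacle is in fact packaged inside the Fabes-Kenig-Serapioni/Tan-Xiong machinery itself: since $|y|^{1-2s}$ is either singular ($s>\f12$) or degenerate ($s<\f12$) on $\{y=0\}$, one cannot reduce to the unweighted Sobolev inequality, and the self-improvement past the naive exponent $\frac{N+1}{N}$ must be extracted from the reverse H\"older property of $A_2$. Once that input is accepted, the reflection-and-extension step above is entirely routine, the equality of norms on $B^*$ and on $Q_R$ follows from the evenness of $w$ under $y\mapsto -y$, and the $R$-dependence of the constant is tracked by a standard scaling of both sides of the inequality.
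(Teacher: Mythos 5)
Your proposal is correct and follows essentially the same route as the paper: the paper simply cites Tan--Xiong's Lemma 2.1 for $f\in C^1_c(Q_R)$ (whose content is exactly your reflection-plus-Fabes--Kenig--Serapioni $A_2$ argument) and then passes to general $f\in X^{2s}_0(Q_R)$ by a density argument and Fatou's lemma. The one point to tighten is order of operations: the weighted Sobolev inequality should be applied to the reflections of the smooth approximants $f_n\in C^\infty_0(Q_R)$ (which are only Lipschitz across $\{y=0\}$, so one needs the inequality for compactly supported Lipschitz functions) and the limit taken afterwards, rather than applying it directly to the non-smooth extension $\tilde f$ as written.
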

\begin{proof}
It is known from \cite[Lemma 2.1]{TX} that the lemma holds for $f\in C^1_c(Q_R)$ (also see \cite{FKS}). For general $f$, the lemma can be easily proved applying density argument and Fatou's lemma.
\end{proof}

  \begin{lemma}\label{l:embed-2}
   Let $f \in X^{2s}_0(Q_R)$. Then there exists a positive constant $\delta$ depending only on $N$ and $s$ such that
   $$\int_{B_R \times \{y=0\}}|f|^2dx \leq \eps \int_{Q_R}y^{1-2s}|\na f|^2dxdy +\frac{C(R)}{\eps^{\delta}}\int_{Q_R}y^{1-2s}|f|^2dxdy,$$ for any $\eps>0$.
  \end{lemma}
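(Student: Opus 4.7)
The plan is to prove the inequality via a density reduction, a basic trace inequality applied at a variable scale $r \in (0, R]$ to a cut-off version of $f$, and an optimization over $r$.

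First, since $X_0^{2s}(Q_R)$ is defined as the closure of $C_0^\infty(Q_R)$ with respect to the weighted gradient norm, a standard density argument combined with Fatou's lemma reduces the problem to $f \in C_0^\infty(Q_R)$. For such $f$, I would fix $r \in (0, R]$ and introduce a smooth cut-off $\chi_r : [0, R] \to [0,1]$ satisfying $\chi_r(0) = 1$, $\chi_r(y) = 0$ for $y \geq r$, and $|\chi_r'| \leq C/r$. Setting $\tilde f := \chi_r f$, one has $\tilde f(x, r) = 0$ and $\tilde f(x, 0) = f(x, 0)$, while $\tilde f$ still vanishes on the lateral boundary of $Q_R$.

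By the fundamental theorem of calculus, $\tilde f(x, 0) = -\int_0^r \partial_y \tilde f(x, y)\,dy$. Applying Cauchy--Schwarz with the factorization $\partial_y \tilde f = y^{(1-2s)/2}\partial_y \tilde f \cdot y^{-(1-2s)/2}$ and using that $\int_0^r y^{2s-1}\,dy = r^{2s}/(2s) < \infty$ (which requires $s > 0$) gives
$$|\tilde f(x,0)|^2 \leq \frac{r^{2s}}{2s}\int_0^r y^{1-2s}|\partial_y \tilde f(x,y)|^2\,dy.$$
Integrating over $x \in B_R$ and expanding $\partial_y \tilde f = \chi_r' f + \chi_r \partial_y f$, together with $|\chi_r'| \leq C/r$ and $\chi_r \leq 1$, leads to the two-scale inequality
$$\int_{B_R}|f(x, 0)|^2\,dx \leq \frac{C_1(s)}{r^{2-2s}}\int_{Q_R} y^{1-2s}|f|^2\,dxdy + C_2(s)\, r^{2s} \int_{Q_R} y^{1-2s}|\na f|^2\,dxdy,$$
valid for every $r \in (0, R]$.

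For the final step, given $\eps > 0$, I would choose $r = (\eps/C_2(s))^{1/(2s)}$ to make the gradient coefficient equal $\eps$. When this value of $r$ satisfies $r \leq R$, a direct computation yields the claimed estimate with $\delta = (1-s)/s > 0$; when $\eps$ is large enough that this choice of $r$ would exceed $R$, the basic trace inequality at $r = R$ already absorbs the $L^2_w$ term into a fixed constant times $\eps$. The main obstacle will be the cut-off error estimation in the $\partial_y \tilde f$ expansion and verifying that the polynomial dependence $\eps^{-\delta}$ emerges cleanly from the optimization; the weighted Cauchy--Schwarz step works precisely because $s > 0$ makes $\int_0^r y^{2s-1}\,dy$ finite, which is the analytic ingredient that makes this interpolation possible in the degenerate weighted setting, even when $s < 1/2$.
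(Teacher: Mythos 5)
Your proof is correct, but it follows a genuinely different route from the paper's. The paper disposes of the smooth case in one line by citing Tan--Xiong \cite[Lemma 2.3]{TX}, and spends its effort only on the density passage (using the continuity of $X^{2s}_0(Q_R)\hookrightarrow L^2(Q_R,y^{1-2s})$ from Lemma \ref{l:embed-1} and the trace embedding to pass to the limit in each of the three integrals). You instead reprove the core interpolation inequality from scratch: the fundamental-theorem-of-calculus representation $\tilde f(x,0)=-\int_0^r\pa_y\tilde f\,dy$ for the truncation $\tilde f=\chi_r f$, the weighted Cauchy--Schwarz step exploiting $\int_0^r y^{2s-1}dy=r^{2s}/(2s)<\infty$, and the resulting two-scale bound
$\int_{B_R}|f(\cdot,0)|^2dx\leq C_1 r^{2s-2}\int_{Q_R}y^{1-2s}|f|^2+C_2r^{2s}\int_{Q_R}y^{1-2s}|\na f|^2$,
optimized at $r\sim\eps^{1/(2s)}$ to give $\delta=(1-s)/s$. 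Your computations are right, and this buys a self-contained argument with an explicit $\delta$, at the cost of some bookkeeping the paper avoids: (i) the large-$\eps$ regime where $(\eps/C_2)^{1/(2s)}>R$ needs a sentence more care than ``the trace inequality absorbs the $L^2_w$ term'' --- one should note that for $\eps$ in the bounded intermediate range one simply enlarges $C(R)$, and for very large $\eps$ one combines the $r=R$ estimate with the weighted Poincar\'e inequality furnished by Lemma \ref{l:embed-1}; and (ii) the density step is not pure Fatou, since the $L^2_w$ term sits on the right-hand side --- you need norm convergence of $f_n\to f$ in $X^{2s}_0(Q_R)$ together with the two continuous embeddings, exactly as the paper does. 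Neither point is a gap in substance, only in exposition.
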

\begin{proof}
If $f\in C^1_c(Q_R)$, then the lemma holds (see \cite[Lemma 2.3]{TX}). For $f \in X^{2s}_0(Q_R)$,  there exists $f_n\in C^{\infty}_0(Q_R)$ such that $f_n\to f$ in $||.||_{X^{2s}_0(Q_R)}$ and for $f_n$, we have
\be\lab{A-81}\int_{B_R \times \{y=0\}}|f_n|^2dx \leq \eps \int_{Q_R}y^{1-2s}|\na f_n|^2dxdy +\frac{C(R)}{\eps^{\delta}}\int_{Q_R}y^{1-2s}|f_n|^2dxdy,
\ee
 for any $\eps>0$. Clearly the 1st integral on RHS converges to $\displaystyle\int_{Q_R}y^{1-2s}|\na f|^2dxdy$. Thanks to Lemma \ref{l:embed-1}, it follows that the embedding $X^{2s}_0(Q_R)\hookrightarrow L^2(Q_R, y^{1-2s})$ is continuous. Therefore,  we can also pass to the limit in the 2nd integral of the RHS. On the other hand, using the trace embedding result, we can also pass to the limit on LHS. Hence, the lemma follows.
 \end{proof}

\section{$L^{\infty}$ estimate and decay estimates}
{\bf Proof of Theorem \ref{t:est}}\begin{proof} 
{\bf Case 1:} Suppose $\Om=\Rn$.

\vspace{2mm}

Let $u$ be an arbitrary weak solution of Eq.\eqref{entire}. We first prove that $u\in L^{\infty}_{loc}(\Rn)$ by Moser iterative technique (see, for example \cite{JLX, TX}).
 From Section-2, we know that $w(x,y)$, the $s-$harmonic extension of $u$, is a solution of \eqref{A-42}. 

Let $B_{r}$ denote the ball in $\R^{N}$ of radius $r$ and centered at origin. We define
$$Q_{r}=B_{r} \times [0,r).$$
Set $\bar w=w^++1$ and for $L>1$, define
\begin{equation*}
w_L=
\left\{\begin{aligned}
      &\bar w \quad&&\text{if}\quad w<L\\
       &1+L  \quad&&\text{if }\quad w\geq L.
         \end{aligned}
  \right.
\end{equation*}
For $t>1 $, we choose the test function $\va$ in \eqref{A-43} as follows:
\be\lab{A-91}\va(x,y)=\eta^2(x, y)\big(\bar w(x,y) w_L^{2(t-1)}(x,y)-1\big),\ee where
$ \eta\in C^{\infty}_0(Q_R)$ with  $0\leq\eta\leq 1$,  
 $\eta=1$ in $Q_r$, $0<r<R\leq 1$ and $|\na \eta| \leq \frac{2}{R-r}$.
Note that $\va \in X^{2s}(\R^{N+1}_{+})$. Using this test function $\va$, we obtain from \eqref{A-43}

\begin{align}\label{w frm}
&k_{2s}\int_{\R^{N+1}_+}y^{1-2s}\na w(x,y) \na {\bigg(\eta^2(x,y)\big(\bar w(x,y)w_L^{2(t-1)}(x,y)-1\big)\bigg)}dxdy\no\\
&\qquad=\int_{\Rn} \big(w^p(x,0)-w^q(x,0)\big)\eta^2(x,0)\big(\bar w(x,0)w_L^{2(t-1)}(x,0)-1\big)dx.
\end{align}
Direct calculation yields
\be\lab{A-83}\na \big(\eta^2(\bar w w_L^{2(t-1)}-1)\big)=2\eta (\bar w w_L^{2(t-1)}-1) \na \eta + \eta^2 w_L^{2(t-1)} \na\bar w+2(t-1)\eta^2 \bar ww_L^{2(t-1)-1} \na w_L. \ee
Here we observe that on the set $\{w<0\}$, we have $\va=0$ and $\na\va=0$. Thus $(\ref{w frm})$ remains  same if we change the domain of integration to $\{w\geq 0\}$. Therefore, in the support of the integrand $\na w=\na\bar w$. As a result,
substituting \eqref{A-83} into $(\ref{w frm})$, it follows
\begin{align*}
&k_{2s}\int_{\R^{N+1}_+}y^{1-2s}\bigg(2 \eta(\bar ww_L^{2(t-1)}-1)\na \eta \na\bar w\no\\
&\qquad+ \eta^2 w_L^{2(t-1))}\na\bar w\na w+2(t-1)\eta^2w_L^{2(t-1)-1}\bar w \na w_L\na w\bigg)(x,y)dxdy\no\\
&\qquad\qquad\qquad\qquad\leq \int_{\Rn} \eta^2(x,0)w^{p}(x,0)\bar w(x,0)w_L^{2(t-1)}(x,0)dx.\no
\end{align*}
Notice that in the support of the integrand of second integral on the LHS $\na\bar w=\na w$   and in the third integral $w_L=\bar w$, $\na w_L=\na w$. Hence the above expression reduces to
\begin{align}\label{grd wfrm}
&k_{2s}\int_{\R^{N+1}_+}y^{1-2s}\bigg(2 \eta(\bar ww_L^{2(t-1)}-1)\na \eta \na\bar w\no\\
&\qquad+ \eta^2 w_L^{2(t-1))}|\na\bar w|^2+2(t-1)\eta^2w_L^{2(t-1)} |\na w_L|^2\bigg)(x,y)dxdy\no\\
&\qquad\qquad\qquad\qquad\leq \int_{\Rn} \eta^2(x,0)\bar w^{p+1}(x,0)w_L^{2(t-1)}(x,0)dx,
\end{align}
where for the RHS, we have used the fact that $w\leq \bar w$.

Using Young's inequality we have,
\begin{align}\label{*2}
\big|2\eta(\bar ww_L^{2(t-1)}-1) \na \eta \na\bar w \big|
\leq \frac{1}{2}\eta^2w_L^{2(t-1)}|\na\bar w|^2+2\bar w^2w_L^{2(t-1)}|\na \eta|^2.
\end{align}
Using \eqref{*2}, from \eqref{grd wfrm} we obtain,
\begin{align} \label{*3}
\f{k_{2s}}{2}\int_{\R^{N+1}_+}y^{1-2s}\bigg(|\na\bar w|^2+(t-1)|\na w_L|^2\bigg)\eta^2 w_L^{2(t-1)}(x,y)dxdy\no\\
\qquad\qquad \leq 2k_{2s}\int_{\R^{N+1}_+}y^{1-2s}\bar w^2w_L^{2(t-1)}|\na \eta|^2(x,y)dxdy\no\\
+\int_{\Rn}\bar w^{p+1}w_L^{2(t-1)}\eta^2(x,0)dx.
\end{align}
As $t>1$ and $\na w_L=0$ for $w \geq L$ , it is not difficult to observe that,
\begin{align}\lab{A-51}
&\int_{\R^{N+1}_+}y^{1-2s}|\na (\eta\bar w w_L^{t-1})|^2 dxdy\no\\
&\leq 3\int_{\R^{N+1}_+}y^{1-2s}\left(\bar w^2w_L^{2(t-1)}|\na \eta|^2+\eta^2 w_L^{2(t-1)}|\na\bar w|^2+(t-1)^2\eta^2w_L^{2(t-1)}|\na w_L|^2\right)dxdy\no\\
&\leq3t\int_{\R^{N+1}_+}y^{1-2s}\bar w^2w_L^{2(t-1)}|\na \eta|^2dxdy\no\\
&+3t\int_{\R^{N+1}_+}y^{1-2s}\bigg(|\na\bar w|^2+(t-1)|\na w_L|^2\bigg)\eta^2 w_L^{2(t-1)}dxdy.
\end{align}

Combining \eqref{A-51} and \eqref{*3}, we have
\begin{eqnarray}\lab{A-55}
&&k_{2s}\displaystyle\int_{\R^{N+1}_+}y^{1-2s}|\na (\eta\bar w w_L^{t-1})|^2 dxdy \no\\
&&\leq 3tk_{2s}\displaystyle\int_{\R^{N+1}_+}y^{1-2s}\bar w^2w_L^{2(t-1)}|\na \eta|^2 dxdy\no\\
&&+3t\Big\{4k_{2s}\displaystyle\int_{\R^{N+1}_+}y^{1-2s}\bar w^2w_L^{2(t-1)}|\na \eta|^2(x,y)dxdy
+2\int_{\Rn}\bar w^{p+1}w_L^{2(t-1)}\eta^2(x,0)dx\Big\}.\no\\
\end{eqnarray}
For $p\geq 2^*-1$, choose $\al>1$ as follows:
\begin{equation}\label{eq:p-t}
\frac{N}{2s}<\al<\frac{q+1}{p-1}.
\end{equation}
Note that for $p=2^*-1$ the interval $(\f{N}{2s}, \f{q+1}{p-1})$ is always a nonempty set. On the other hand, as $q>(p-1)\f{N}{2}-1$, it follows $(\f{N}{2s}, \f{q+1}{p-1})\not=\emptyset$, when $p>2^*-1$. From \eqref{eq:p-t} we have,
$$(p-1)\al<q+1 \quad\text{and}\quad 2<\frac{2\al}{\al-1}<2^*.$$ 
As supp$(\eta(\cdot, 0))\subset B_R$ and $w(x,0)= u\in L^{q+1}(\Rn)$, it follows $\bar w(.,0)=w^{+}(x,0)+1=u+1\in L^{q+1}(B_1)$.  This along with the fact that supp $\eta\subset Q_R$, where $R<1$, we obtain

\begin{align}\label{pth trm}
 \int_{\Rn}\bar w^{p+1}w_L^{2(t-1)}\eta^2(x,0)dx &= \int_{B_1}\bar w^{p+1}w_L^{2(t-1)}\eta^2(x,0)dx\no\\
 &=\int_{B_1}|\eta\bar w w_L^{(t-1)}(x,0)|^2\bar w^{p-1}(x,0)dx\no\\
 &\leq\left(\int_{B_1}\bar w^{\al(p-1)}(x,0)dx\right)^{\f{1}{\al}}
 \left(\int_{B_R}|\eta\bar w w_L^{(t-1)}|^{\f{2\al}{\al-1}}(x,0)dx\right)^{\f{\al-1}{\al}}\no\\
 &\leq C \|\eta\bar w w_L^{(t-1)}\|_{L^{\f{2\al}{\al-1}}(B_R)}^2.
\end{align}
By interpolation inequality,
\begin{align}\lab{A-54}
 \|\eta\bar w w_L^{(t-1)}\|_{L^{\f{2\al}{\al-1}}(B_R)}^2\leq
 \|\eta\bar w w_L^{(t-1)}\|_{L^{2}(B_R)}^{2\theta} \|\eta\bar w w_L^{(t-1)}\|_{L^{2^*}(B_R)}^{2(1-\theta)},
\end{align}
where $\theta$ is determined by \be\lab{5-theta}\f{\al-1}{2\al}=\f{\theta}{2}+\f{1-\theta}{2^*}.\ee
Applying Young's inequality,  \eqref{A-54} yields
\begin{align}\lab{A-61}
 \|\eta\bar w w_L^{(t-1)}\|_{L^{\f{2\al}{\al-1}}(B_R)}^2&\leq
 C(s,\al,N)\eps^2 \|\eta\bar w w_L^{(t-1)}\|_{L^{2^*}(\mathbb{R}^N)}^{2} \no\\
  &+ C(\al,s,N) \eps^{-\f{2(1-\theta)}{\theta}}\|\eta\bar w w_L^{(t-1)}\|_{L^{2}(B_R)}^2.
\end{align}
Therefore, using Sobolev Trace inequality \eqref{tr-ineq} and the value of $\theta$ from \eqref{5-theta}, we have
\begin{align}\label{trc}
 \|\eta\bar w w_L^{(t-1)}\|_{L^{\f{2\al}{\al-1}}(B_R)}^2&\leq
 C(s,\al,N)\eps^2 \int_{\mathbb{R}_+^{N+1}}y^{1-2s}|\na\left(\eta\bar w w_L^{(t-1)}\right)|^2dxdy \no\\
 &\qquad+ C(\al,s,N) \eps^{-\f{2N}{2\al s-N}}\int_{B_R}|\eta\bar w w_L^{(t-1)}(x,0)|^2dx.
\end{align}
Thanks to Lemma \ref{l:embed-2}, for $\delta>0$ we have
\bea\label{ppr lm}
 \int_{B_R}|\eta\bar w w_L^{(t-1)}(x,0)|^2dx &=&\int_{B_1}|\eta w w_L^{(t-1)}(x,0)|^2dx\no\\
 & \leq&\delta \int_{Q_1}y^{1-2s}|\na\left(\eta\bar w w_L^{(t-1)}\right)|^2dxdy \no\\
 &+& \f{C}{\delta^\ba}\int_{Q_1}y^{1-2s}|\eta\bar w w_L^{(t-1)}|^2dxdy,
\eea
where  $\ba=\f{s'+1}{s'-1}$, with some $1<s'<\f{1}{1-s}.$ Substituting  $(\ref{ppr lm})$ in $(\ref{trc})$ and then  $(\ref{trc})$ in  $(\ref{pth trm})$ yields
\begin{align}\label{pth trm1}
 \int_{\Rn}\bar w^{p+1}w_L^{2(t-1)}\eta^2(x,0)dx 
 &\leq C(s,\al,N)\eps^2 \int_{\mathbb{R}_+^{N+1}}y^{1-2s}|\na\left(\eta\bar w w_L^{(t-1)}\right)|^2dxdy \no\\
 &+ C(\al,s,N) \eps^{-\f{2N}{2\al s-N}}\delta \int_{\R^{N+1}_+}y^{1-2s}|\na\left(\eta\bar w w_L^{(t-1)}\right)|^2dxdy \no\\
 &+ C(\al,s,N) \eps^{-\f{2N}{2rs-N}}\f{1}{\delta^\ba}\int_{\R^{N+1}_+}y^{1-2s}|\eta\bar w w_L^{(t-1)}|^2dxdy.
\end{align}
Consequently, substituting \eqref{pth trm1} in \eqref{A-55}, we obtain
\begin{align}\label{intg2}
\int_{\R^{N+1}_+}y^{1-2s}|\na (\eta\bar w w_L^{t-1})|^2 dxdy
&\leq Ct\int_{\R^{N+1}}y^{1-2s}\bar w^2w_L^{2(t-1)}|\na \eta|^2 dxdy \no\\
&+Ct\bigg(\eps^2+\eps^{-\f{2N}{2\al s-N}}\de\bigg) \int_{\mathbb{R}_+^{N+1}}y^{1-2s}|\na\left(\eta\bar w w_L^{(t-1)}\right)|^2dxdy \no\\
 &+ Ct\eps^{-\f{2N}{2\al s-N}}\de^{-\ba}\int_{\R^{N+1}}y^{1-2s}|\eta\bar w w_L^{(t-1)}|^2dxdy. 
\end{align}
Choose $$\eps= \f{1}{2\sqrt{Ct}} \quad\text{and}\quad  \delta=\f{\eps^\f{2N}{2\al s-N}}{4Ct}.$$ 
Hence,  from $(\ref{intg2})$, a direct calculation yields  
\begin{eqnarray}\label{intg3}
\f{1}{2}\int_{\R^{N+1}_+}y^{1-2s}|\na (\eta\bar w w_L^{t-1})|^2 dxdy
 &\leq& Ct\int_{\R^{N+1}_+}y^{1-2s}\bar w^2w_L^{2(t-1)}|\na \eta|^2 dxdy \no\\
 &+& Ct^\f{2\al s(\ba+1)}{2\al s-N} \int_{\R^{N+1}_+}y^{1-2s}|\eta\bar w w_L^{(t-1)}|^2dxdy\no\\
&\leq& Ct^{\ga} \int_{\R^{N+1}_+}y^{1-2s}\left(\eta^2+|\na\eta|^2\right)\bar w^2 w_L^{2(t-1)}dxdy.
\end{eqnarray}
where $\ga=\f{2\al s(\ba+1)}{2\al s-N}.$ 
Applying Sobolev inequality  (see Lemma \ref{l:embed-1}), we obtain from \eqref{intg3}
\begin{eqnarray*}
\left(\int_{Q_1}y^{1-2s}|\eta\bar w w_L^{t-1}|^{2\chi} dxdy\right)^{\f{1}{\chi}}
&\leq& C\int_{Q_1}y^{1-2s}|\na (\eta\bar w w_L^{t-1})|^2 dxdy\no\\
&\leq& Ct^{\ga} \int_{Q_1}y^{1-2s}\left(\eta^2+|\na\eta|^2\right)\bar w^2 w_L^{2(t-1)}dxdy,
\end{eqnarray*}
where $\chi=\f{N+1}{N}>1$. Now using the fact that $0<r<R<1$, $\eta=1$ in $Q_r$, $|\na\eta|\leq\f{2}{R-r}$ and supp $\eta=Q_R$, we get
\begin{align*}
\bigg(\int_{Q_r}y^{1-2s}\bar w^{2\chi} w_L^{2(t-1)\chi} dxdy\bigg)^\f{1}{\chi}
 &\leq \f{Ct^\ga}{(R-r)^2} \int_{Q_R}y^{1-2s}\bar w^2 w_L^{2(t-1)}dxdy.
\end{align*}
As $w_L\leq \bar w$, the above expression yields,
\begin{align*}
\left(\int_{Q_r}y^{1-2s} w_L^{2t\chi } dxdy\right)^{\f{1}{\chi}}
 &\leq \f{Ct^\ga}{(R-r)^2} \int_{Q_R}y^{1-2s}\bar w^{2t}dxdy,
\end{align*}
provided the right-hand side is bounded. Passing to the limit $L\to\infty$ via Fatou's lemma we obtain
\be
\left(\int_{Q_r}y^{1-2s}\bar w^{2t\chi } dxdy\right)^{\f{1}{\chi}}\leq \f{C t^\ga}{(R-r)^2} \int_{Q_R}y^{1-2s}\bar w^{2t}dxdy,\no
 \ee
 that is,
 \be\label{intg4}
 \left(\int_{Q_r}y^{1-2s}\bar w^{2t\chi } dxdy\right)^{\f{1}{2\chi t}}
 \leq \left(\f{Ct^\ga}{(R-r)^2}\right)^{\f{1}{2t}} \left(\int_{Q_R}y^{1-2s}\bar w^{2t}dxdy\right)^{\f{1}{2t}}.
 \ee
Now we  iterate the above relation.  We take $t_i=\chi^i$ and $r_i=\f{1}{2}+\f{1}{2^{i+1}}$ for $i=0,1,2,\dots$
Note that $t_i=\chi t_{i-1}$, $r_{i-1}-r_i=\f{1}{2^{i+1}}$. Hence from \eqref{intg4}, with
$t=t_i$, $r=r_i$, 
$R=r_{i-1}$, we have
\begin{align*}
 \left(\int_{Q_{r_i}}y^{1-2s}\bar w^{2t_{i+1}} dxdy\right)^{\f{1}{2 t_{i+1}}}
 &\leq C^{\f{i}{\chi^i}}\left(\int_{Q_{r_{i-1}}}y^{1-2s} \bar w^{2t_{i}} dxdy\right)^{\f{1}{2 t_{i}}}, \quad i=0, 1, 2, \cdots,
\end{align*}
where $C$ depend only on $N, s, p, q$.   Hence, by iteration we have
\begin{align*}
 \left(\int_{Q_{r_i}}y^{1-2s}\bar w^{2t_{i+1}} dxdy\right)^{\f{1}{2 t_{i+1}}}
 &\leq C^{\sum\f{i}{\chi^i}}\left(\int_{Q_{r_0}}y^{1-2s}\bar w^{2t_0} dxdy\right)^{\f{1}{2 t_0}}, \quad i=0, 1, 2, \cdots,
\end{align*}
Letting $i\to\infty$ we have
\begin{align*}
 \sup _{Q_{\f{1}{2}}}\bar w \leq C|\bar w|_{L^{2}(Q_1,y^{1-2s})},
\end{align*}
which in turn implies
\begin{align*}
  \sup _{B_{\f{1}{2}}}u=\sup _{B_{\f{1}{2}}}w^{+}\leq  \sup _{Q_{\f{1}{2}}}w^+  \leq C\|w\|_{L^{2}(Q_1,y^{1-2s})}.
\end{align*}
Hence, $u\in L^{\infty}(B_{\f{1}{2}}(0)).$ Translating the equation, similarly it follows that $u\in L^{\infty}_{loc}(\Rn)$.

\vspace{3mm}

To show the $L^\infty$ bound at infinity, we define the Kelvin transform of $u$ by the function $\tilde u$ as follows:
$$\tilde u(x)=\f{1}{|x|^{N-2s}}u(\f{x}{|x|^2}), \quad x\in\Rn\setminus\{0\}.$$
It follows from \cite[Proposition A.1]{RS4}, 
\be\lab{Kel}
(-\De)^s\tilde u(x)=\f{1}{|x|^{N+2s}}(-\De)^s u\bigg(\f{x}{|x|^2}\bigg).
\ee
Thus \bea 
(-\De)^s\tilde u(x)&=&\f{1}{|x|^{N+2s}}\displaystyle\left(u^p(\f{x}{|x|^2})-u^q(\f{x}{|x|^2})\right)\no\\
&=& \f{1}{|x|^{N+2s}}\displaystyle\left(|x|^{p(N-2s)}\tilde u^p(x)-|x|^{q(N-2s)}\tilde u^q(x)\right).\no
\eea
This implies $\tilde u$ satisfies the following equation 
\begin{equation}
  \label{eq:til-u}
\left\{\begin{aligned}
      (-\De)^s \tilde u &=|x|^{p(N-2s)-(N+2s)}\tilde u^p -|x|^{q(N-2s)-(N+2s)}\tilde u^q \quad\text{in }\quad \Rn, \\
      \tilde u &\in \dot{H}^s(\Rn)\cap L^{q+1}(\Rn, |x|^{(N-2s)(q+1)-2N}), \\
      \tilde u&>0 \quad\Rn.
 \end{aligned}
  \right.
\end{equation}
That is, \be\lab{A-15-1}(-\De)^s \tilde u= f(x, \tilde u)  \quad\text{in}\quad \Rn,\ee
where \be\lab{A-15-2}f(x, \tilde u) :=|x|^{p(N-2s)-(N+2s)}\tilde u^p -|x|^{q(N-2s)-(N+2s)}\tilde u^q.\ee
Since  $q>p\geq \f{N+2s}{N-2s}$, we get $(-\De)^s \tilde u\leq \tilde u^p$ in $(B_1(0))$. Applying the Moser iteration technique along the same line of arguments as above with a suitable modification, we get $\sup_{B_\rho(0)}\tilde u\leq C$, for some $\rho>0$ and $C$ is a positive constant. This in turn implies, 
\be\lab{up-est-1} u(x)\leq \f{C}{|x|^{N-2s}}\quad |x|>R_0 ,\ee for some large $R_0$. Hence, $u\in L^\infty(\Rn)$. As a consequence $\tilde u\in L^\infty(\Rn)$ and therefore $(-\De)^s\tilde u\in L^\infty(B_1(0))$. Applying Theorem \ref{t:RS}, it follows that $\tilde u\in C(B_{\f{1}{2}}(0))$. Thus there exists $C_1>0$ such that $\tilde u>C_1$ in $(B_{\f{1}{2}}(0))$, which in turn implies $u(x)>\f{C_1}{|x|^{N-2s}}$, for $|x|>2$. This along with \eqref{up-est-1}, yields  \eqref{sol-est} .

\vspace{3mm}

{\bf Case 2:} $\Om$ is a bounded domain. 

\vspace{2mm}

Arguing along the same line with minor modifications, it can be shown that  $u\in L^{\infty}(\Om)$. Therefore the conclusion follows as $u=0$ in $\Rn\setminus\Om$.
 \end{proof}
 
 \vspace{3mm}
  
{\bf Proof of Theorem \ref{t:reg-1}:} \begin{proof}
(i) From Theorem \ref{t:est}, we know any solution $u$ of Eq.\eqref{entire} is in $L^\infty(\Rn)$. Therefore, we have 
\be\lab{1}
(-\De)^s u=f(u), \quad f(u):=u^p-u^q \in L^\infty(\Rn).
\ee As a result, applying Theorem \ref{t:RS}(a) , we obtain
\bea\lab{16}
||u||_{C^{2s}(B_{\f{1}{2}}(0))}&\leq& C(||u||_{L^{\infty}(\Rn)}+||f(u)||_{L^{\infty}(B_1(0))}) \no\\
&\leq& C(||u||_{L^{\infty}(\Rn)}+||f(u)||_{L^{\infty}(\Rn)}) \quad\text{if}\quad s\not=\f{1}{2},
\eea
\bea\lab{17}
||u||_{C^{2s-\eps}(B_{\f{1}{2}}(0))}&\leq& C(||u||_{L^{\infty}(\Rn)}+||f(u)||_{L^{\infty}(B_1(0))})\no\\
&\leq& C (||u||_{L^{\infty}(\Rn)}+||f(u)||_{L^{\infty}(\Rn)}) \quad\text{if}\quad s=\f{1}{2},
\eea
for all $\eps>0$. Here the constants $C$ are independent of $u$, but may depend on radius $\f{1}{2}$ and centre $0$.  Since  the equation is invariant under translation, translating the equation, we obtain 
\bea\lab{18}
||u||_{C^{2s}(B_{\f{1}{2}}(y))} &\leq& C(||u||_{L^{\infty}(\Rn)}+||f(u)||_{L^{\infty}(\Rn})  \no\\
&\leq& C(1+||u||_{L^{\infty}(\Rn)})^q \quad\text{when}\quad s\not=\f{1}{2},
\eea
\be\lab{19} ||u||_{C^{2s-\eps}(B_{\f{1}{2}}(y))}\leq C(1+||u||_{L^{\infty}(\Rn)})^q 
\quad\text{when}\quad s=\f{1}{2},\ee
Note that in \eqref{18} and \eqref{19} constants $C$ are same as in \eqref{16} and \eqref{17} respectively. Thus, in \eqref{18} and \eqref{19} constants do not depend on $y$. This implies $u\in C^{2s}(\Rn)$ when $s\not=\f{1}{2}$ and in $C^{2s-\eps}(\Rn)$, when $s=\f{1}{2}$. Hence, $f(u)\in C^{2s}(\Rn)$ when  $s\not=\f{1}{2}$ and in $C^{2s-\eps}(\Rn)$, when $s=\f{1}{2}$. Therefore, applying Theorem \ref{t:RS}(b), we have
\bea\lab{20}
||u||_{C^{4s}(B_{\f{1}{2}}(0))}&\leq& C(||u||_{C^{2s}(\Rn)}+||f(u)||_{C^{2s}(B_1(0))}) \no\\
&\leq& C(||u||_{C^{2s}(\Rn)}+||f(u)||_{C^{2s}(\Rn)}) \no\\
&\leq& C(1+||u||_{L^{\infty}(\Rn)})^{2q}\quad\text{if}\quad s\not=\f{1}{4}, \f{1}{2},  \f{3}{4}.
\eea
Similarly,
\bea\lab{21}
||u||_{C^{4s-\eps}(B_{\f{1}{2}}(0))}&\leq& C(||u||_{C^{2s-\eps}(\Rn)}+||f(u)||_{C^{2s-\eps}(B_1(0))})\no\\
&\leq& C (1+||u||_{L^{\infty}(\Rn)})^{2q} \quad\text{if}\quad s=\f{1}{2} \ \text{and}\ 4s-\eps\not\in\N.  
\eea
Arguing as before, we can show that $u\in C^{4s}(\Rn)$ when $s\not=\f{1}{2}$ and in $C^{4s-\eps}(\Rn)$, when $s=\f{1}{2}$. We can repeat this argument to improve the regularity $C^{\infty}(\Rn)$  if both $p$ and $q$ are integer and $C^{2ks+2s}(\Rn)$, where $k$ is the largest integer satisfying   $\lfloor 2ks\rfloor<p$  if $p\not\in\N$ and $\lfloor 2ks\rfloor<q$ if $p\in\N$ but $q\not\in\N$, where $\lfloor 2ks\rfloor$ denotes the greatest integer less than equal to $2ks$ .

\vspace{2mm}

(ii) Suppose, $u$ is an arbitrary solution of \eqref{eq:a3'}, then by Theorem \ref{t:est}, $u\in L^{\infty}(\Rn)$ and thus $f(u)=u^p-u^q\in L^{\infty}(\Rn)$. Consequently, by \cite[Proposition 1.1]{RS4}, it follows $u\in C^s(\Rn)$. Since $q, p>1$, we have $f(u)\in C^s_{loc}(\Rn)$. Therefore by Theorem \ref{t:RS}(ii), $u\in C^{2s+\al}_{loc}(\Om)$ for some $\al\in(0,1)$.
 
\end{proof}

\begin{proposition}\lab{p:class} Let $p,\ q,\ s$ are as in Theorem \ref{t:est}. 
If $u$ is any nonnegative weak solution of Eq.\eqref{entire} or \eqref{eq:a3'},  then $u$ is a classical solution.\end{proposition}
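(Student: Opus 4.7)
The plan is to assemble the necessary ingredients from Theorems \ref{t:est} and \ref{t:reg-1} so that the singular integral in \eqref{De-u} is pointwise well-defined and then upgrade the weak equation to a pointwise one via a density/Fubini argument. Concretely, Theorem \ref{t:est} gives $u\in L^{\infty}(\Rn)$ in both cases (and $u(x)=O(|x|^{-(N-2s)})$ in the entire case, while $u\equiv 0$ on $\Rn\setminus\Om$ in the bounded case). Theorem \ref{t:reg-1}(i) yields $u\in C^{\beta}(\Rn)$ with $\beta=4s$ (taking $k=1$) or better for the entire problem, and Theorem \ref{t:reg-1}(ii) gives $u\in C^{s}(\Rn)\cap C^{2s+\al}_{\text{loc}}(\Om)$ with $\al>0$ for the Dirichlet problem. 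In either situation, at any point $x$ where we wish to test the equation, $u$ enjoys local Hölder regularity with exponent strictly greater than $2s$.

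Next I would verify that the integral in \eqref{De-u} converges absolutely for every such $x$. Split it into $|y|<1$ and $|y|\geq 1$. On the near-diagonal piece, the local $C^{\gamma}$ bound with $\gamma>2s$ gives
$$\bigl|u(x+y)+u(x-y)-2u(x)\bigr|\leq C|y|^{\gamma},$$
so the integrand is dominated by $C|y|^{\gamma-N-2s}$, which is integrable near $0$. On the far piece, $u\in L^{\infty}(\Rn)$ (combined with $u=0$ outside $\Om$ in the bounded case) makes the integrand bounded by $4\|u\|_{L^{\infty}}|y|^{-N-2s}$, which is integrable at infinity. Hence the pointwise expression
$$g(x):=-\frac{c_{N,s}}{2}\int_{\Rn}\frac{u(x+y)+u(x-y)-2u(x)}{|y|^{N+2s}}\,dy$$
defines a continuous function on $\Rn$ (respectively on $\Om$), by dominated convergence and the continuity of $u$.

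Finally I would identify $g$ with the weak fractional Laplacian. For any $\va\in C^{\infty}_{c}(\Rn)$ (or $C^{\infty}_{c}(\Om)$ in the bounded case), the integrand $|y|^{-N-2s}\bigl(u(x+y)+u(x-y)-2u(x)\bigr)\va(x)$ is absolutely integrable on $\Rn\times\Rn$ by the bounds above, so Fubini applies; after a symmetrization in $y\mapsto -y$ and an integration by parts, this yields
$$\int_{\Rn}g(x)\va(x)\,dx=\int_{\Rn}\int_{\Rn}\frac{(u(x)-u(y))(\va(x)-\va(y))}{|x-y|^{N+2s}}\,dx\,dy.$$
Combining with Definition \ref{def-1} gives $\int g\,\va=\int f(u)\,\va$ for every admissible $\va$. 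Since both $g$ and $f(u)=u^{p}-u^{q}$ are continuous, by the fundamental lemma of calculus of variations we conclude $g(x)=f(u(x))$ pointwise. Hence $(-\De)^{s}u$ admits the integral representation \eqref{De-u} and \eqref{u-f} holds in the classical sense.

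The main technical hurdle is in the second paragraph: making sure the local Hölder exponent coming from Theorem \ref{t:reg-1} is strictly above $2s$ across all subcases (in particular the delicate case $s=\tfrac12$, where one must iterate the Schauder estimate once more to push past the $C^{2s-\eps}$ barrier), so that the singular integral truly converges absolutely and defines a continuous function. Once this is established, the passage from the weak identity to the pointwise equation is a routine application of Fubini and the density of test functions.
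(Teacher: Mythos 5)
Your argument is correct and, for the whole-space case, is essentially the paper's own proof: the authors likewise combine the $L^{\infty}$ bound of Theorem \ref{t:est} with the interior $C^{2s+\al}_{loc}$ regularity of Theorem \ref{t:reg-1} to see that the second-difference integral in \eqref{De-u} converges absolutely (splitting at $|y|=\tfrac12$ exactly as you do), and then pass from the weak identity $\int_{\Rn}(-\De)^{s/2}u\,(-\De)^{s/2}\va=\int_{\Rn}f(u)\va$ to the pointwise equation by testing against $C^{\infty}_0$ functions and invoking continuity; your Fubini/symmetrization step is just a more explicit rendering of the paper's terse ``this in turn implies''. The only genuine divergence is in the bounded-domain case: there the paper does not repeat the direct identification but instead mollifies, setting $u_{\eps}=u*\rho_{\eps}$ and $f_{\eps}=f(u)*\rho_{\eps}$, showing $(-\De)^s u_{\eps}=f_{\eps}$ classically in $U\subset\subset\Om$ following Servadei--Valdinoci, and passing to the limit. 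Your direct route also works here, since for $x\in\Om$ one can split the integral at a radius $\de$ with $B_{\de}(x)\subset\Om$ and use $C^{2s+\al}_{loc}(\Om)$ near the diagonal and $u\in L^{\infty}(\Rn)$ (with $u\equiv 0$ outside $\Om$) far away, and test functions in $C^{\infty}_c(\Om)$ suffice; this is arguably cleaner than the mollification detour, though it buys nothing the paper's argument does not. Your flagged concern about the case $s=\tfrac12$ is handled correctly: one iteration of the Schauder estimate already yields $C^{4s-\eps}$ with $4s-\eps>2s$, so the near-diagonal integrand is dominated by $|y|^{\ga-N-2s}$ with $\ga>2s$ in all subcases.
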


 \begin{proof}
{\bf Case 1}: Let $u$ be a weak solution of \eqref{entire}. 

  First, we show that $(-\De)^s u(x)$ can be defined as in \eqref{De-u}. Using $u\in L^{\infty}(\Rn)$, we see that
\begin{equation*}
\bigg|\int_{\Rn\setminus B_{\f{1}{2}}(0)}\f{u(x+y)-2u(x)+u(x-y)}{|y|^{N+2s}}dy\bigg|\leq C\int_{\Rn\setminus B_{\f{1}{2}}(0)}\f{dy}{|y|^{N+2s}}<\infty.
\end{equation*}
On the other hand,   since by Theorem \ref{t:reg-1}, $u\in C^{2s+\al}_{loc}(\Rn)$ for some $\al\in (0,1)$, it follows that $\bigg|\displaystyle\int_{B_{\f{1}{2}}(0)}\f{u(x+y)-2u(x)+u(x-y)}{|y|^{N+2s}}dy\bigg|<\infty$. Hence $(-\De)^s u(x)$ is defined pointwise.

Next, we show that  the Eq. \eqref{entire} is satisfied in pointwise sense.  $u$ is a weak solution implies
$$\int_{\Rn}(-\De)^{\f{s}{2}}u(-\De)^{\f{s}{2}}\va\ dx=\int_{\Rn}u^p\va\ dx-\int_{\Rn}u^q\va\ dx \quad\forall\va\in C^{\infty}_0(\Rn).$$
This in turn implies
$$\int_{\Rn}\va(-\De)^{s}u\ dx=\int_{\Rn}u^p\va\ dx-\int_{\Rn}u^q\va\ dx \quad\forall\va\in C^{\infty}_0(\Rn).$$ Therefore, $(-\De)^s u=u^p-u^q$ in $\Rn$  almost everywhere and $u\in C^{2s+\al}$ implies $$(-\De)^s u(x)=u^p(x)-u^q(x)\quad\forall x \in\Rn.$$
Hence, $u$ is a classical solution of \eqref{entire}.

\vspace{2mm}

{\bf Case 2:} Suppose $u$ is a weak solution of \eqref{eq:a3'}. Then applying Theorem \ref{t:est} and Theorem \ref{t:reg-1}, we can show as in Case 1 that   $(-\De)^s u(x)$ can be defined in pointwise sense.

Now we are left to show that \eqref{eq:a3'} is satisfied in pointwise sense. Towards this goal, we
define 
$$f(u)=u^p-u^q, \quad u_{\eps}:=u*\rho_{\eps} \quad \text{and}\quad f_{\eps}:=f(u)*\rho_{\eps},$$ where $\rho_{\eps}$ is the standard molifier. 
Namely, we take $\rho_{\eps}=\eps^{-N}\rho(\f{x}{\eps})$ where\\
 $\rho\in C^{\infty}_0(\Rn)$ with $0\leq\rho\leq 1$, supp $\rho\subseteq\{|x|\leq 1\}$ and $\displaystyle\int_{\Rn}\rho\ dx=1$.

Then $u_{\eps}, \  f_{\eps}\in C^{\infty}$. Proceeding along the same line as in the proof of \cite[Proposition 5]{SV3}, we can show that, for $\eps>0$ small enough it holds  
\be\lab{3}
(-\De)^s u_{\eps}=f_{\eps} \quad \text{in}\quad U,\ee
 in the classical sense, where $U$ is any arbitrary subset of $\Om$ with $U\subset\subset\Om$. Moreover, it is easy to note that
$u_{\eps}\to u$ and $f_{\eps}\to f(u)$ locally uniformly and 
$$||u_{\eps}||_{L^{\infty}(B_1(0))}\leq ||u||_{L^{\infty}(\Rn)} \quad \text{and}\quad ||f_{\eps}||_{L^{\infty}(B_1(0))}\leq C||u||_{L^{\infty}(\Rn)}.$$ 
Taking the limit $\eps\to 0$ on both the sides of \eqref{3} and using the regularity estimate of $u_{\eps}$ from theorem \ref{t:reg-1}, we obtain, $$\lim_{\eps\to 0}\int_{\Rn}\f{u_{\eps}(x+y)-2u_{\eps}(x)+u_{\eps}(x-y)}{|y|^{N+2s}}dy=f(u).$$
Using the arguments used before, it is not difficult to check that LHS of above relation converges to  $(-\De)^s u$ as $\eps\to 0$ and hence the result follows.

 \end{proof}

{\bf Proof of Theorem \ref{t:grad-est}}\begin{proof}
First, we observe that  from Theorem \ref{t:reg-1}, it follows $u\in C^1(\Rn)$. Let $R_0$ be as in Theorem \ref{t:est}. 
For $R>R_0$,  define $v(x)=R^{N-2s}u(Rx)$. Then 
\bea\lab{oct-6-1}
(-\De)^s v(x)&=&R^{N}\big((-\De)^s u\big)(Rx)\no\\
&=&R^N(u^p(Rx)-u^q(Rx))\no\\
&=&R^{N-p(N-2s)}v^p-R^{N-q(N-2s)}v^q.
\eea
From Theorem \ref{t:est}, we have $|u(x)|\leq \frac{C}{|x|^{N-2s}}$ for $|x|>R_0$. Consequently, we get 
\be\lab{oct-6-2}|v(x)|\leq \f{C}{|x|^{N-2s}} \quad\text{for}\quad |x|>\frac{R_0}{R},\ee
where $C$ is independent of $R$.  
Since $q>p\geq \f{N+2s}{N-2s}$, it follows $N-q(N-2s)<N-p(N-2s)<0$ and thus $(-\De)^s v\in L^{\infty}(B_{\f{R_0}{R}}(0))^c$ and that $L^{\infty}$ bound does not depend on $R$. \\
Let $A_1:=\{1<|x|<2 \}$ and $x_0\in A_1$. Suppose $r>0$ is such that $B_{2r}(x_0)\subset A_1$.  We choose $\eta\in C^{\infty}_0(\Rn)$ such that $\eta=1$ in $B_r(x_0)$ and supp $\eta\subset B_{2r}(x_0)$. Clearly $v\eta\in L^{\infty}(\Rn)$ and $||\eta v||_{L^{\infty}(\Rn)}\leq C_1$, where $C_1$ is independent of $R$. Moreover,
\be\lab{oct-7-3}(-\De)^s(v\eta)=(-\De)^s v+(-\De)^s\big((\eta-1)v\big).\ee
Note that, for  $z\in B_r(x_0)$ we have 
$$(-\De)^s\big((\eta-1)v\big)(z)=c_{N,s}\displaystyle\int_{\Rn\setminus B_r(x_0)}\frac{-\big((\eta-1)v\big)(y)}{|z-y|^{N+2s}}dy.$$
From this expression we obtain
\bea\lab{oct-6-3}
||(-\De)^s\big((\eta-1)v\big)||_{L^{\infty}(B_r(x_0))}&\leq& C\displaystyle\int_{\Rn}\frac{v(y)}{(1+|y|)^{N+2s}}dy\no\\
&=&C\int_{B_\f{R_0}{R}(0)} \frac{v(y)}{(1+|y|)^{N+2s}}dy\no\\
&+&C\int_{|y|>\f{R_0}{R}} \frac{v(y)}{(1+|y|)^{N+2s}}dy.
\eea
Now, using the definition of $v$ and the fact that $u\in L^{\infty}(\Rn)$, we get
\bea\lab{oct-7-1}
\int_{B_\f{R_0}{R}(0)} \frac{v(y)}{(1+|y|)^{N+2s}}dy&=&R^{N-2s}\int_{B_\f{R_0}{R}(0)} \frac{u(Ry)}{(1+|y|)^{N+2s}}dy\no\\
&=&CR^N\int_{B_{R_0}(0)} \frac{u(x)dx}{(R+|x|)^{N+2s}}\no\\
&\leq&C\f{R^N}{R^{N+2s}}|B_{R_0}(0)|<C',
\eea
where $C'$ is independent of $R$ (since, $R^{-2s}<1$).
On the other hand, using \eqref{oct-6-2} we have
\bea\lab{oct-7-2}
\int_{|y|>\f{R_0}{R}} \frac{v(y)}{(1+|y|)^{N+2s}}dy&=&C\int_{|y|>\f{R_0}{R}}\f{dy}{|y|^{N-2s}(1+|y|)^{N+2s}}\no\\
&\leq&C\int_{\Rn}\f{dy}{|y|^{N-2s}(1+|y|)^{N+2s}}\no\\
&\leq&C\int_{B_1(0)}\f{dy}{|y|^{N-2s}}+\int_{|y|>1}\f{dy}{|y|^{2N}}\no\\
&\leq&C,
\eea
for some constant $C>0$, which does not depend on $R$. Plugging \eqref{oct-7-1} and \eqref{oct-7-2} into \eqref{oct-6-3} and then using \eqref{oct-7-3} we obtain $||(-\De)^s(\eta v)||_{L^{\infty}(B_{r}(x_0))}<C$, where $C$  depends only on $N, s, p, q, R_0$.
Consequently, using \cite[Proposition 2.3]{RS4}, we get 
$$||(\eta v)||_{C^{\ba}(\overline{B_\f{r}{2}(x_0)})}\leq C \quad\forall\ \ba\in(0,2s),$$
where $C$  depends only on $N, s, p, q, R_0$. As a consequence,
$$||v||_{C^{\ba}(\overline{B_\f{r}{2}(x_0)})}\leq C.$$
Thus, thanks to \cite[Corollary 2.4]{RS4} we have $$||v||_{C^{\ba+2s}(\overline{B_\f{r}{8}(x_0)})}\leq C.$$
We continue to apply this bootstrap argument and after a finitely many steps we have $||v||_{C^{\ba+ks}(\overline{B_{r_0}(x_0)})}\leq C$.
 for some $r_0>0$ and $\ba+ks>1$. This in turn implies 
$||\na v||_{L^{\infty}(\overline{B_{r_0}(x_0)})}\leq C$. This further yields to
$$||\na v||_{L^{\infty}(A_1)}\leq C,$$
 where $C$ depends only on $N, s, p, q, R_0$.  Therefore, using the definition of $v$, we obtain 
 $$|\na u(Rx)|\leq\f{C}{R^{N-2s+1}} \quad\text{for}\ 1<|x|<2.$$ From the above expression, it is easy to deduce that
 $$|\na u(y)|\leq\f{C}{|y|^{N-2s+1}} \quad\text{for}\ R<|y|<2R.$$
As $R>R_0$ was arbitrary we get 
 $$|\na u(y)|\leq\f{C}{|y|^{N-2s+1}} \quad\text{for}\ |y|>R,$$ for some $R$ large.

\end{proof} 

\section{Pohozaev identity and nonexistence result}
{\bf Proof of Theorem \ref{t:poho}:}
We prove this theorem by establishing Pohozaev identity in the spirit of Ros-Oton and Serra \cite{RS2}.
For $\la>0$, define $u_{\la}(x)=u(\la x)$. Multiplying the equation \eqref{entire} by $u_{\la}$ yields,
\bea\lab{oct-7-4}
\int_{\Rn}(u^p-u^q)u_{\la}dx&=&\int_{\Rn}(-\De)^\f{s}{2}u(-\De)^\f{s}{2}u_{\la}dx\no\\
&=&\la^s\int_{\Rn}(-\De)^\f{s}{2}u(x)\big((-\De)^\f{s}{2}u\big)(\la x)dx\no\\
&=&\la^s\int_{\Rn}ww_{\la}dx,
\eea
where, $w(x):=(-\De)^\f{s}{2}u(x)$ and $w_{\la}(x)=w(\la x)$. 
With the change of variable $x=\sqrt{\la}y$, we have
\be\lab{oct-7-5}
\la^s\int_{\Rn}ww_{\la}dx=\la^s\int_{\Rn}w(x)w(\la x)dx
=\la^{-\f{N-2s}{2}}\int_{\Rn}w_{\sqrt{\la}}w_\f{1}{\sqrt{\la}} dy.
\ee
Therefore, \be\lab{oct-7-8}
\int_{\Rn}(u^p-u^q)u_{\la}dx=\la^{-\f{N-2s}{2}}\int_{\Rn}w_{\sqrt{\la}}w_\f{1}{\sqrt{\la}} dy.
\ee
Observe that using the decay estimate at infinity of $u$ and $\na u$ from Theorem \ref{t:est} and Theorem \ref{t:grad-est} , we get $\displaystyle\int_{\Rn}(u^p-u^q)(x\cdot\na u)dx$ is well defined and that integral can be written as $\displaystyle\int_{\Rn}x\cdot\na\bigg(\f{u^{p+1}}{p+1}-\f{u^{q+1}}{q+1}\bigg)dx.$
Again using the decay estimate of $u$ from Theorem \ref{t:est}, we justify the following integration by parts 
\be\lab{oct-7-6}-\f{N}{p+1}\int_{\Rn}u^{p+1}dx+\f{N}{q+1}\int_{\Rn}u^{q+1}dx=\int_{\Rn}x\cdot\na\bigg(\f{u^{p+1}}{p+1}-\f{u^{q+1}}{q+1}\bigg)dx.\ee
Thus, using \eqref{oct-7-8} we simplify the LHS of above expression as follows:
\bea\lab{oct-7-7}
\text{LHS of \eqref{oct-7-6}}&=&\displaystyle\int_{\Rn}(u^p-u^q)(x\cdot\na u)dx\no\\
&=&\f{d}{d\la}\bigg|_{\la=1}\int_{\Rn}(u^p-u^q)u_{\la} dx\no\\
&=&\f{d}{d\la}\bigg|_{\la=1}\bigg(\la^{-\f{N-2s}{2}}\int_{\Rn}w_{\sqrt{\la}}w_\f{1}{\sqrt{\la}}\bigg) dx.\no\\
&=&-\bigg(\f{N-2s}{2}\bigg)\int_{\Rn}w^2 dx+\f{d}{d\la}\bigg|_{\la=1}\int_{\Rn}w_{\sqrt{\la}}w_\f{1}{\sqrt{\la}}dy\no\\
&=&-\bigg(\f{N-2s}{2}\bigg)||u||_{\dot{H}^s(\Rn)}^2.
\eea

On the other hand, multiplying \eqref{entire} by $u$ we have,
$$||u||_{\dot{H}^s(\Rn)}^2=\int_{\Rn}(u^{p+1}-u^{q+1})dx.$$
Combining this expression along with \eqref{oct-7-7} we obtain the Pohozaev identity
$$\bigg(\f{N-2s}{2}-\f{N}{p+1}\bigg)\int_{\Rn}u^{p+1}dx=\bigg(\f{N-2s}{2}-\f{N}{q+1}\bigg)\int_{\Rn}u^{q+1}dx.$$
Clearly, from the above identity, it follows that \eqref{entire} does not admit any solution when $p=2^*-1$ and $q>p$. This completes the theorem.
\hfill$\square$

\section{Symmetry and monotonically decreasing property}
\begin{theorem}
Let $p, q, s$ are as in Theorem \ref{t:est}  and $u$ be any solution of Eq.\eqref{entire}. Then $u$ is radially symmetric and strictly decreasing about some point in $\Rn$.
\end{theorem}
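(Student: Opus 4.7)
The plan is to apply the direct method of moving planes for the fractional Laplacian, in the spirit of Chen-Li-Ou, to $u$ in all of $\Rn$. The machinery is available precisely because $u$ is a bounded classical solution by Proposition \ref{p:class}, with the sharp decay $u(x)=O(|x|^{-(N-2s)})$ at infinity from Theorem \ref{t:est}.

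For a fixed direction, say $e_1$, and each $\la\in\R$, I would introduce the half-space $\Si_\la=\{x\in\Rn: x_1<\la\}$, the reflection $x^\la=(2\la-x_1,x_2,\dots,x_N)$, and the antisymmetric difference $w_\la(x)=u(x^\la)-u(x)$. Since $f(t)=t^p-t^q$ is $C^1$ on $[0,\|u\|_{L^\infty}]$, the mean value theorem yields
\begin{equation*}
(-\De)^s w_\la(x)=c_\la(x)\,w_\la(x)\quad\text{in }\Rn,
\end{equation*}
with $c_\la(x)=p\xi_\la^{p-1}(x)-q\xi_\la^{q-1}(x)$ for some $\xi_\la(x)$ between $u(x)$ and $u(x^\la)$. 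Theorem \ref{t:est} then gives $|c_\la(x)|\leq C|x|^{-(p-1)(N-2s)}=o(|x|^{-2s})$ at infinity, using $(p-1)(N-2s)\geq 4s>2s$.

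The core step is the decay-at-infinity lemma: for all $\la$ sufficiently negative, $\Si_\la^{-}:=\{w_\la<0\}\cap\Si_\la=\emptyset$. Indeed, if $w_\la(x^0)=\min_{\Si_\la}w_\la<0$, pairing $y\leftrightarrow y^\la$ in the integral defining $(-\De)^sw_\la(x^0)$ and exploiting the antisymmetry $w_\la(y^\la)=-w_\la(y)$ produces
\begin{equation*}
(-\De)^sw_\la(x^0)\leq\f{c_{N,s}\,w_\la(x^0)}{2}\int_{\Si_\la}\bigg(\f{1}{|x^0-y|^{N+2s}}-\f{1}{|x^0-y^\la|^{N+2s}}\bigg)dy,
\end{equation*}
whose right-hand integral is bounded below by $C|x^0|^{-2s}$. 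Combined with the equation this forces $c_\la(x^0)\geq C|x^0|^{-2s}$, contradicting the decay of $c_\la$ for $|x^0|$ large; since also $u\to 0$ at infinity, for $\la$ sufficiently negative no such minimum can lie in any compact set either. Defining
\begin{equation*}
\la_0=\sup\{\la\in\R : w_\mu\geq 0 \text{ in }\Si_\mu \text{ for every }\mu\leq\la\},
\end{equation*}
I then argue that $w_{\la_0}\equiv 0$ on $\Si_{\la_0}$: otherwise the strong maximum principle for antisymmetric functions gives $w_{\la_0}>0$ in $\Si_{\la_0}$, and the decay-at-infinity lemma together with a narrow-region argument allow one to slide the plane past $\la_0$ while preserving $w_\la\geq 0$, a contradiction. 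Running this in every direction $e\in S^{N-1}$ yields radial symmetry about some $x_0\in\Rn$, and applying the strong maximum principle to $w_\la$ for $\la<\la_0$ in a fixed radial direction (recentered at $x_0$) delivers strict radial decrease.

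The main obstacle is to set up the two nonlocal ingredients, namely the decay-at-infinity lemma and the strong maximum principle for antisymmetric functions, in the precise form required. Both hinge on the sharp decay $u(x)=O(|x|^{-(N-2s)})$ of Theorem \ref{t:est}, which produces exactly the borderline behavior $c_\la(x)=o(|x|^{-2s})$ that the moving planes method in the whole space demands. A further subtlety is that $f(t)=t^p-t^q$ is not monotone, so sign properties of $f$ cannot be used directly; the argument must proceed throughout via the linearized coefficient $c_\la$, exploiting only its uniform boundedness and its decay at infinity.
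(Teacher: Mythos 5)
Your route is genuinely different from the paper's, which is essentially a two-line verification of hypotheses followed by a citation: the paper checks that $f(t)=t^p-t^q$ is locally Lipschitz, that the difference quotient obeys the one-sided bound $\f{f(v)-f(u)}{v-u}\leq p(u+v)^{p-1}$ for $0<u<v$, and that the decay exponent $N-2s$ of Theorem \ref{t:est} exceeds $\max\left(\f{2s}{p-1},\f{N}{p+1}\right)$, and then invokes \cite[Theorem 1.2]{FW1}. What you propose is to reprove that black box from scratch by the direct method of moving planes. The underlying mechanism is identical — both proofs live off the same two facts, namely the one-sided power bound on the difference quotient (which is exactly what makes your linearized coefficient satisfy $c_\la(x^0)\leq pu(x^0)^{p-1}=O(|x^0|^{-(p-1)(N-2s)})=o(|x^0|^{-2s})$, using $(p-1)(N-2s)\geq 4s$) and the sharp upper bound of Theorem \ref{t:est}. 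Your version buys self-containedness and makes visible where the non-monotonicity of $f$ is harmless (only the one-sided bound on $c_\la$ is ever used); the paper's version buys brevity at the price of an external dependence.

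If you write this out, the key displayed estimate must be repaired: as written, $\displaystyle\int_{\Si_\la}\left(\f{1}{|x^0-y|^{N+2s}}-\f{1}{|x^0-y^\la|^{N+2s}}\right)dy=+\infty$, because the first kernel has a non-integrable singularity at $y=x^0\in\Si_\la$ which the second does not cancel, so your right-hand side equals $-\infty$ and the inequality cannot hold. The correct standard manipulation writes the integrand after folding as $(w_\la(x^0)-w_\la(y))(A-B)+2w_\la(x^0)B$ with $A=|x^0-y|^{-N-2s}\geq B=|x^0-y^\la|^{-N-2s}$, drops the first (nonpositive) term, and yields $(-\De)^sw_\la(x^0)\leq 2c_{N,s}\,w_\la(x^0)\int_{\Si_\la^c}|x^0-z|^{-N-2s}dz\leq -C|x^0|^{-2s}|w_\la(x^0)|$, which is what forces $c_\la(x^0)\geq C|x^0|^{-2s}$ and hence $|x^0|\leq R_0$. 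Two smaller points: the bound $|c_\la(x)|\leq C|x|^{-(p-1)(N-2s)}$ is valid only where $w_\la(x)<0$ (there $\xi_\la(x)\leq u(x)$); stated for all $x$ it fails since $\xi_\la$ may equal $u(x^\la)$, which decays in $|x^\la|$, not $|x|$. And you should record why $\la_0<+\infty$ — otherwise $u$ would be nondecreasing in $x_1$ on every half-space, contradicting $u>0$ and $u\to0$ at infinity. None of this affects the viability of the approach.
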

\begin{proof}
By Proposition \ref{p:class}, $u$ is a classical solution of \eqref{entire}. Define $f(u)=u^p-u^q$. Then clearly $f$ is locally Lipschitz. \\

{\bf Claim:} There exists $s_0, \ga, C>0 $ such that 
$$\f{f(v)-f(u)}{v-u}\leq C(u+v)^{\ga} \quad\text{for all}\quad 0<u<v<s_0.$$

To see the claim,
\bea
f(v)-f(u) &=&(v^p-u^p)-(v^q-u^q)\no\\
&=&p\big(\theta_1 v+(1-\theta_1)u\big)^{p-1}(v-u)-q\big(\theta_2 v+(1-\theta_2)u\big)^{q-1}(v-u),\no
\eea
for some $\theta_1, \theta_2\in(0,1)$.  Thus, for $0<u<v$
\bea
\f{f(v)-f(u)}{v-u}&=&p\big(\theta_1 v+(1-\theta_1)u\big)^{p-1}-q\big(\theta_2 v+(1-\theta_2)u\big)^{q-1}\no\\
&\leq&p\big(\theta_1 v+(1-\theta_1)u\big)^{p-1}\no\\
&\leq& p(u+v)^{p-1}.\no
\eea
Therefore, the claim holds with $C=p$ and $\ga=p-1$ and for any positive $s_0$. 

Moreover, from Theorem \ref{t:reg-1}, we have 
$$u(x)=O(\f{1}{|x|^{N-2s}}) \quad\text{as}\quad |x|\to\infty.$$

Since $p\geq \f{N+2s}{N-2s}$, it is easy to check that $$N-2s>\max\bigg(\f{2s}{\ga}, \f{N}{\ga+2}\bigg),$$
where $\ga=p-1$, as found in the above claim.  Hence, the theorem follows from \cite[Theorem 1.2]{FW1}.
\end{proof}

\begin{theorem}
Suppose $\Om$ is a smooth bounded convex domain, $p, q, s$ are as in Theorem \ref{t:est} . Assume further that 
$\Om$ is convex in $x_1$ direction and symmetric w.r.t. to the hyperplane $x_1=0$. Let $s\in(0,1)$ and $u$ be any  solution of Eq.\eqref{eq:a3'}. Then $u$ is symmetric w.r.t. $x_1$ and strictly decreasing in $x_1$ direction for $x=(x_1,x')\in\Om$, $x_1>0$.
\end{theorem}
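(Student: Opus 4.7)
The plan is to run the method of moving planes in the $x_1$-direction, using the antisymmetric maximum principles for $(-\De)^s$ developed by Jarohs--Weth and Fall--Weth. Set $a=\sup_{x\in\Om}x_1$, and for $\la\in[0,a]$ introduce the hyperplane $T_\la=\{x_1=\la\}$, the cap $\Sigma_\la=\Om\cap\{x_1>\la\}$, the reflection $x^\la=(2\la-x_1,x')$, and the difference
\[
w_\la(x)=u(x^\la)-u(x).
\]
Because $\Om$ is symmetric about $T_0$ and convex in $x_1$, the reflected cap $\Sigma_\la^\la$ sits inside $\Om$ for every $\la\in[0,a]$. Since $(-\De)^s$ commutes with isometries and \eqref{eq:a3'} holds pointwise (Proposition \ref{p:class}), $w_\la$ satisfies
\[
(-\De)^s w_\la(x)=c_\la(x)w_\la(x)\quad\text{in }\Sigma_\la,
\]
where $c_\la(x):=\bigl(f(u(x^\la))-f(u(x))\bigr)/w_\la(x)$ with $f(t)=t^p-t^q$. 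By Theorem \ref{t:est}, $u\in L^\infty(\Rn)$, so $f$ is Lipschitz on the range of $u$ and $\|c_\la\|_{L^\infty}$ is bounded uniformly in $\la$. Moreover $w_\la$ is antisymmetric across $T_\la$, and $w_\la\ge0$ on $\Rn\setminus\Sigma_\la$ because $u\ge0$, $u=0$ off $\Om$, and $\Sigma_\la^\la\subset\Om$.

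The moving-planes scheme then proceeds in the usual way. For $\la$ sufficiently close to $a$, the cap $\Sigma_\la$ has small Lebesgue measure, so the antisymmetric narrow-region maximum principle applied to $(-\De)^s w_\la+c_\la^-(x)w_\la\ge0$ gives $w_\la\ge0$ in $\Sigma_\la$. This initializes the procedure. Define
\[
\la_0=\inf\bigl\{\la\ge0:\ w_\mu\ge0\text{ in }\Sigma_\mu\text{ for every }\mu\in[\la,a]\bigr\}.
\]
If $\la_0>0$, continuity yields $w_{\la_0}\ge0$ in $\Sigma_{\la_0}$, and the antisymmetric strong maximum principle forces either $w_{\la_0}\equiv0$ or $w_{\la_0}>0$ in $\Sigma_{\la_0}$. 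The first alternative is excluded because $\la_0>0$ would then force $\Om$ to be symmetric about $T_{\la_0}$ as well, contradicting the geometry of $\Om$ and the fact that $u=0$ on $\Rn\setminus\Om$. Hence $w_{\la_0}>0$ strictly inside $\Sigma_{\la_0}$, and a standard combination of a pointwise lower bound on a compact set $K\subset\Sigma_{\la_0}$ with the narrow-region principle on the thin complement $\Sigma_\la\setminus K$ yields $w_\la\ge0$ in $\Sigma_\la$ for every $\la\in[\la_0-\de,\la_0]$, contradicting the minimality of $\la_0$. Therefore $\la_0=0$ and $w_0\ge0$, i.e.\ $u(-x_1,x')\ge u(x_1,x')$ for every $x\in\Om$ with $x_1>0$.

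Applying the identical argument to $\tilde u(x_1,x'):=u(-x_1,x')$, which solves the same equation on the same domain, produces the reverse inequality, and so $w_0\equiv0$: this is the symmetry in $x_1$. Strict monotonicity in the positive $x_1$ direction follows from the strict inequality $w_\la>0$ in $\Sigma_\la$ for every $\la\in(0,a)$ together with the interior regularity $u\in C^{2s+\al}_{loc}(\Om)$ provided by Theorem \ref{t:reg-1}, which upgrades the differential quotient statement to $\pa_{x_1}u<0$ at every interior point with $x_1>0$.

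The principal difficulty is not the sliding procedure itself, which is classical, but the justification of the antisymmetric narrow-region and strong maximum principles used at each step. These are delicate because the nonlocal operator $(-\De)^s$ couples values of $w_\la$ at distant points; the trick is to exploit the antisymmetry $w_\la(x^\la)=-w_\la(x)$ to rewrite the singular integral defining $(-\De)^s w_\la(x)$ on $\Sigma_\la$ against a positive kernel proportional to $|x-y|^{-N-2s}-|x-y^\la|^{-N-2s}>0$, which controls the nonlocal interaction with the reflected cap. Once this kernel-positivity is in hand the argument parallels the Gidas--Ni--Nirenberg scheme.
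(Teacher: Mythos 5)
Your proposal is correct and follows essentially the same route as the paper: the paper's proof consists of citing Fall--Weth \cite[Theorem 3.1]{FW} and Jarohs--Weth \cite[Cor.~1.2]{JW}, which are precisely the fractional moving-plane results (with the antisymmetric narrow-region and strong maximum principles) that you reconstruct. The only small imprecision is that the nonnegativity of $w_\la$ outside the cap should be asserted on $\{x_1>\la\}\setminus\Sigma_\la$ rather than on all of $\Rn\setminus\Sigma_\la$, with antisymmetry handling the reflected half-space; this is exactly how the cited maximum principles are stated, so the argument goes through.
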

\begin{proof}
Follows from \cite[Theorem 3.1]{FW} (also see \cite[Cor. 1.2]{JW}).
\end{proof}

\section{Existence results}
\begin{lemma}\lab{rad-est}
Let $s\in(0,1)$. If $u$ is any radially symmetric decreasing function in $\dot{H}^s(\Rn)$, then
$$u(|x|)\leq \f{C}{|x|^\f{N-2s}{2}}.$$
\end{lemma}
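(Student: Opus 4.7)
The plan is to exploit the critical Sobolev embedding $\dot{H}^s(\Rn)\hookrightarrow L^{2^*}(\Rn)$, where $2^*=\frac{2N}{N-2s}$, together with the monotonicity of $u$. The key observation is that for a nonnegative radially decreasing function, the value $u(r)$ at radius $r$ controls $u$ from above on the ball $B_r(0)$, which in turn yields a volumetric lower bound for the $L^{2^*}$ mass in $B_r(0)$.

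More precisely, I would argue as follows. Since $u$ is radial and nonincreasing in $|x|$, for any $\rho\le r$ we have $u(\rho)\ge u(r)$. Therefore
\begin{equation*}
\int_{\Rn}|u|^{2^*}\,dx \;\ge\; \int_{B_r(0)}|u|^{2^*}\,dx \;\ge\; u(r)^{2^*}\,|B_r(0)| \;=\; c_N\, r^N\, u(r)^{2^*}.
\end{equation*}
Next, by the fractional Sobolev embedding (equivalently, by the trace inequality \eqref{tr-ineq1}), there exists a constant $S=S(N,s)>0$ such that
\begin{equation*}
\left(\int_{\Rn}|u|^{2^*}\,dx\right)^{2/2^*} \le S\,\|u\|_{\dot{H}^s(\Rn)}^2 .
\end{equation*}
Combining these two inequalities gives
\begin{equation*}
u(r)^{2^*} \;\le\; \frac{1}{c_N r^N}\int_{\Rn}|u|^{2^*}\,dx \;\le\; \frac{C}{r^N}\,\|u\|_{\dot{H}^s(\Rn)}^{2^*},
\end{equation*}
and raising to the power $1/2^*$ and noting that $N/2^* = (N-2s)/2$ yields
\begin{equation*}
u(|x|)\;\le\; \frac{C\,\|u\|_{\dot{H}^s(\Rn)}}{|x|^{(N-2s)/2}},
\end{equation*}
which is the desired estimate.

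There is essentially no obstacle here: both ingredients (pointwise monotonicity and the critical Sobolev embedding) are already available from Section~2. The only mild care needed is to ensure the statement is read correctly for $u$ merely in $\dot{H}^s(\Rn)$ (so pointwise values are defined up to a.e.\ equivalence via its $L^{2^*}$ representative); since $u$ is assumed radially decreasing, the representative can be chosen everywhere-defined and monotone, so the integral lower bound on $B_r(0)$ is rigorous.
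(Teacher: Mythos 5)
Your proof is correct and is essentially identical to the paper's: both restrict the $L^{2^*}$ integral to the ball $B_r(0)$, use radial monotonicity to get the lower bound $c_N r^N u(r)^{2^*}$, and then invoke the critical Sobolev embedding, yielding the exponent $N/2^*=(N-2s)/2$. No further comment is needed.
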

\begin{proof}
It is enough to show that if $u\in \dot{H}^s(\Rn)$ with $u(x)=u(|x|)$ and \\ $u(r_1)\leq u(r_2)$, when $r_1\geq r_2$, then it holds $u(R)\leq \f{C}{R^\f{N-2s}{2}}$ for any $R>0$. To see this, we note that by Sobolev inequality we can write,
\bea
\f{1}{S}||(-\De)^{\f{s}{2}}u||_{L^2(\Rn)} &\geq& \bigg(\int_{\Rn} |u(x)|^{2^*}dx\bigg)^{\f{1}{2^*}}\no\\
&\geq&\bigg(\int_{0}^R\int_{\pa B_r} |u(r)|^{2^*} dS dr\bigg)^{\f{1}{2^*}}\no\\
&\geq& u(R)\bigg(\int_0^R \om_n r^{N-1}dr\bigg)^{\f{1}{2^*}}\no\\
&=&\bigg(\f{\om_N}{N}\bigg)^\f{1}{2^*}u(R) R^{\f{N}{2^*}}.
\eea
As $u\in \dot{H}^s(\Rn)$ implies LHS is bounded above, the above inequality yields
 $$u(R)\leq \bigg(\f{N}{\om_N}\bigg)^\f{1}{2^*}\f{1}{S}||(-\De)^{\f{s}{2}}u||_{L^2(\Rn)} R^{-\f{N-2s}{2}}\leq C R^{-\f{N-2s}{2}}.$$
\end{proof}

{\bf Proof of Theorem \ref{1.2}}\begin{proof}
We are going to work on the manifold $$\mathcal{N}= \bigg\{ u\in \dot{H}^s(\R^N) \cap L^{q+1}(\R^N):  \int_{\R^N} |u|^{p+1}dx=1 \bigg\},$$ and  $F(.)$ on $\mathcal{N}$ reduces  as
$$F(u)=  \frac{1}{2}\int_{\R^N}\int_{\Rn}\frac{|u(x)-u(y)|^2}{|x-y|^{N+2s}} dxdy +  \frac{1}{q+1}\int_{\R^N}|u|^{q+1} dx.$$

Let $u_{n} $ be a minimizing sequence in $\mathcal{N}$ such that
$$F(u_{n}) \to \mathcal{K} \text{ with } \int_{\R^N} |u_{n}|^{p+1} dx =1.$$
Thus, $\{u_n\}$ is a bounded sequence in $\dot{H}^s(\Rn)$ and $L^{q+1}(\Rn)$. Therefore, there exists $u\in \dot{H}^s(\Rn)$ and $L^{q+1}(\Rn)$ such that $u_n \rightharpoonup u$ in $\dot{H}^s(\Rn)$ and $L^{q+1}(\Rn)$. Consequently $u_{n}\to u$ pointwise almost everywhere.

Using symmetric rearrangement technique, without loss of generality, we can assume that $u_n$ is radially symmetric and decreasing (see \cite{P}). 
We claim that $u_{n} \to u$ in $L^{p+1}(\R^N).$\\
To see the claim, we note that $u_{n}^{p+1}\to u^{p+1}$ pointwise almost everywhere.  Since $\{u_n\}$ is uniformly bounded in $L^{q+1}(\Rn)$, using Vitali's convergence theorem, it is easy to check that $\displaystyle\int_{K} |u_{n}|^{p+1}dx\to \int_{K}  |u|^{p+1}dx$  for any compact set $K$  in $\R^N$ containing the origin.
Furthermore, applying Lemma \ref{rad-est} it follows, $\displaystyle\int_{\R^N \setminus K}|u_{n}|^{p+1} dx$ is very small  and hence we have strong convergence. Moreover, $\displaystyle\int_{\R^N}|u_{n}|^{p+1}dx=1 $  implies
$\displaystyle\int_{\R^N }|u|^{p+1} dx=1.$\\

Now we show that $\mathcal{K}= F(u).$

We note that $u\mapsto ||u||^2$ is weakly lower semicontinuous. Using this fact along with  Fatou's lemma, we have

\begin{eqnarray*}\mathcal{K} &=&   \lim_{n \to \infty } \bigg[\frac{1}{2}\int_{\R^N}\int_{\Rn}\f{|u_{n}(x)-u_n(y)|^2}{|x-y|^{N+2s}} dxdy+  \frac{1}{q+1}\int_{\R^N}|u_{n}|^{q+1} dx\bigg] \\
&=&\lim_{n \to \infty } \bigg[\frac{1}{2}||u_n||^2+\frac{1}{q+1}\int_{\R^N}|u_{n}|^{q+1} dx\bigg]\\
&\geq & \frac{1}{2}||u||^2+  \frac{1}{q+1}\int_{\R^N}|u|^{q+1} dx\bigg]\\
&\geq & F(u)   .\end{eqnarray*}
This proves $F(u)=\mathcal{K}$. Moreover, using the  symmetric rearrangement technique via. Polya-Szego inequality (see \cite{P}), it is easy to check that $u$ is nonnegative, radially symmetric and radially  decreasing 
 Applying the Lagrange multiplier rule, we obtain $u$ satisfies
\be\no - \De u+ u^q=  \lambda u^p, \ee
for some $\lambda>0$. This in turn implies
\be\no (- \De)^s u=  \lambda u^p - u^q  ~~~~\text{ in } ~~~ \R^N.\ee
Finally, if $q>(p-1)\f{N}{2s}-1$, then we know that $u$ is a classical solution. Therefore, if there exists $x_0\in\Rn$ such that $u(x_0)=0$, that that would imply $(-\De)^s u(x_0)<0$ (since, $u$ is a nontrivial solution). On the other hand, $(\lambda u^p - u^q)(x_0)=0$ and that yields a contradiction. Hence $u>0$ in $\Rn$.

Furthermore, we observe that by setting $v(x)=\la^{-\f{1}{q-p}}u(\la^{-\f{q-1}{2s(q-p)}}x)$, it holds 
$$(-\De)^s v=v^p-v^q \quad\text{in}\quad\Rn.$$ Hence the theorem follows.

\end{proof}

{\bf Proof of Theorem \ref{1.3}}\begin{proof}
We are going to work on the manifold $$\mathcal{\tilde N}= \bigg\{ u\in X_0 \cap L^{q+1}(\Om): \int_{\Om} |u|^{p+1}=1 \bigg\}.$$
Then $F_{\Om}$ reduces to
$$F_{\Om}(u)=  \frac{1}{2}\int_{\R^N}\int_{\Rn}\frac{|u(x)-u(y)|^2}{|x-y|^{N+2s}} dxdy +  \frac{1}{q+1}\int_{\Om}|u|^{q+1} dx . $$
Let $u_{n} $ be a minimizing sequence in $\mathcal{\tilde N}$ such that $F_{\Om}(u_{n}) \to S_{\Om},$ then 
$$F(u_{n}) \to S_{\Om} \text{ with } \int_{\Om} |u_{n}|^{p+1} dx =1.$$
Then $u_{n}$ is bounded in $X_0 \cap L^{q+1}(\Om).$ Consequently,  $u_{n}\rightharpoonup u$ on $H^s(\Om)$ and $u_{n} \to u$ on $L^2(\Om).$ As a result,  $u_{n}\to u$ pointwise almost everywhere. By the interpolation inequality,  we must have $u_{n} \to u$ on $ L^{p+1}(\Om).$
Hence, $\displaystyle\int_{\Om }|u|^{p+1} dx =1.$\\

Now we show that $S_{\Om}= F_{\Om}(u).$ Using Fatou's Lemma and the fact that $u\mapsto ||u||^2$ is weakly lower semicontinuous ,
\begin{eqnarray*}S_{\Om} &=&   \lim_{n \to \infty } \bigg[\frac{1}{2}\int_{\R^N}\int_{\Rn}\frac{|u_n(x)-u_n(y)|^2}{|x-y|^{N+2s}} dxdy +  \frac{1}{q+1}\int_{\Om}|u_{n}|^{q+1} dx\bigg] \\&\geq &  \bigg[\frac{1}{2}\int_{\R^N}\int_{\Rn}\frac{|u(x)-u(y)|^2}{|x-y|^{N+2s}} dxdy+  \frac{1 }{q+1}\int_{\Om}|u|^{q+1} dx\bigg]\\ &\geq & F_{\Om}(u)   .\end{eqnarray*} 

By the Lagrange multiplier rule, we obtain $u$ satisfies
\be\no (- \De)^s u+  |u|^{q-1}u=  \lambda |u|^{p-1}u. \ee

Now we replace $\mathcal{\tilde N}$ by $\mathcal{\tilde N}_+:= \bigg\{ u\in X_0 \cap L^{q+1}(\Om): \displaystyle\int_{\Om} (u^+)^{p+1}=1 \bigg\}$, the functional $F_{\Om}(.)$ by $\tilde{F}_{\Om}(.)$ defined as follows
$$\tilde{F}_{\Om}(u):=  \frac{1}{2}\int_{\R^N}\int_{\Rn}\frac{|u(x)-u(y)|^2}{|x-y|^{N+2s}} dxdy +  \frac{1}{q+1}\int_{\Om}(u^+)^{q+1} dx ,$$ and 
$S_{\Om}$ by $\tilde{S}_{\Om}:= \inf\bigg\{F(v, \Om): v\in \mathcal{\tilde N}_+\bigg\}$. Repeating the same argument as before (with a little modification), it can be easily shown that there exists $u\in X_0\cap L^{q+1}(\Om)$ which satisfies
\be\lab{Mar-18-1}
(- \De)^s u+  (u^+)^q=  \lambda (u^+)^p \quad\text{in}\quad\Om.
\ee
Taking $u^-$ as the test function for \eqref{Mar-18-1} we obtain from Definition \ref{def-1} that 
\be\lab{Mar-18-2}\int_{\Rn}\int_{\Rn}\f{(u(x)-u(y))(u^-(x)-u^-(y))}{|x-y|^{N+2s}}dxdy=0.\ee
Furthermore,
\Bea
\text{LHS of}\  \eqref{Mar-18-2} &=&\int_{\Rn}\int_{\Rn}\f{(u(x)-u(y))(u^-(x)-u^-(y))}{|x-y|^{N+2s}}dxdy\\
&=&\int_{\Rn}\int_{\Rn}\f{\big((u^+(x)-u^+(y))-(u^-(x)-u^-(y))\big)(u^-(x)-u^-(y))}{|x-y|^{N+2s}}dxdy\\
&=&-u^-(x)u^+(y)-u^+(x)u^-(y)-||u^-||^2\\
&\leq&-||u^-||^2
\Eea
Hence, from \eqref{Mar-18-2} we obtain $u^-=0$, i.e, $u\geq 0$. Moreover, since for $p\geq 2^*-1$ and $q>(p-1)\f{N}{2s}-1$,  Proposition \ref{p:class} implies $u$ is a classical solution,  applying maximum principle as in Theorem \ref{1.2}, we conclude $u>0$ in $\Om$. This completes the proof.
\end{proof}

\appendix
\section{}
In this section we give an alternative proof of Pohozaev identity in $\Rn$ for the following type of equations:
\be\lab{oct-7-9}(-\De)^s u=f(u) \quad\text{in}\quad\Rn,\ee
 where $u\in \dot{H}^s(\Rn)\cap L^{\infty}(\Rn)$ and $f\in C^2$. Here we do not require the decay estimate of $u$ or $\na u$ at infinity.
\begin{theorem}
Let $u\in \dot{H}^s(\Rn)\cap L^{\infty}(\Rn)$ be a  positive solution of \eqref{oct-7-9} and $F(u)\in L^1(\Rn)$. 
Then
$$(N-2s)\int_{\Rn}uf(u)\ dx=2N\int_{\Rn}F(u)\ dx,$$
where $F(u)=\displaystyle\int_{0}^u f(t)dt$.
\end{theorem}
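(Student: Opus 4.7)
My plan is to prove the identity via the Caffarelli-Silvestre extension together with a bulk Pohozaev multiplier and a cutoff argument, so that no pointwise decay of $u$ or $\nabla u$ is needed. Let $w=E_s(u)\in X^{2s}(\R^{N+1}_+)$ be the $s$-harmonic extension of $u$, which satisfies $\mathrm{div}(y^{1-2s}\nabla w)=0$ in $\R^{N+1}_+$ together with $-k_{2s}\lim_{y\to 0^+}y^{1-2s}w_y=f(u)$ on $\R^N\times\{0\}$, and use that $k_{2s}\int_{\R^{N+1}_+}y^{1-2s}|\nabla w|^2\,dxdy=\|u\|_{\dot H^s(\R^N)}^2=\int_{\R^N}uf(u)\,dx$ (the last equality coming from testing \eqref{oct-7-9} with $u$).

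For $R>1$ I would choose a cutoff $\eta_R\in C^\infty_c(\overline{\R^{N+1}_+})$ with $\eta_R\equiv 1$ on $\{|(x,y)|<R\}$, $\mathrm{supp}\,\eta_R\subset\{|(x,y)|<2R\}$ and $|\nabla\eta_R|\leq C/R$, and test the weak form of the extension equation against the Pohozaev multiplier $\phi_R:=\eta_R\,Z\cdot\nabla w$ with $Z=(x,y)\in\R^{N+1}$. (The test function can first be made admissible by a standard mollification of $w$ in the $x$-variable; all limits can then be passed because $y^{1-2s}|\nabla w|^2\in L^1(\R^{N+1}_+)$.) Using the pointwise identity
\[
\nabla w\cdot\nabla(Z\cdot\nabla w)=\tfrac12 Z\cdot\nabla|\nabla w|^2+|\nabla w|^2
\]
(valid since $\nabla Z=I_{N+1}$) together with $\mathrm{div}(y^{1-2s}Z)=(N+2-2s)y^{1-2s}$, integration by parts in the bulk gives
\[
k_{2s}\!\int y^{1-2s}\nabla w\cdot\nabla[\eta_R(Z\cdot\nabla w)]\,dxdy
=-\tfrac{N-2s}{2}k_{2s}\!\int y^{1-2s}|\nabla w|^2\eta_R\,dxdy+E_R,
\]
where $E_R$ collects terms in which $\nabla\eta_R$ appears (the boundary contribution at $\{y=0\}$ from the quadratic term vanishes because $Z\cdot \nu=-y|_{y=0}=0$). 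Each term in $E_R$ is bounded by $C\int_{\{R<|(x,y)|<2R\}}y^{1-2s}|\nabla w|^2\,dxdy$ because $|Z||\nabla\eta_R|\leq C$ on the support of $\nabla\eta_R$, and this tends to $0$ as $R\to\infty$ by dominated convergence.

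On the right-hand side, $(Z\cdot\nabla w)|_{y=0}=x\cdot\nabla u(x)$ (the $yw_y$ contribution disappears because $w_y=O(y^{2s-1})$ near $\{y=0\}$), so the boundary pairing yields
\[
\int_{\R^N}f(u)(x\cdot\nabla u)\,\eta_R(x,0)\,dx
=\int_{\R^N}x\cdot\nabla F(u)\,\eta_R(x,0)\,dx
=-\int_{\R^N}F(u)\bigl[N\eta_R(x,0)+x\cdot\nabla_x\eta_R(x,0)\bigr]dx,
\]
and, since $F(u)\in L^1(\R^N)$, the $x\cdot\nabla_x\eta_R$ term is bounded by $C\int_{|x|>R}|F(u)|\to 0$ as $R\to\infty$. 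Passing to the limit and using $k_{2s}\int y^{1-2s}|\nabla w|^2=\int uf(u)$, I obtain $-\tfrac{N-2s}{2}\int uf(u)\,dx=-N\int F(u)\,dx$, which is the claimed identity.

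The main obstacle is purely technical: justifying that $\phi_R=\eta_R Z\cdot\nabla w$ is an admissible test function and that all boundary integrations by parts hold without pointwise decay on $u$ or $\nabla u$. This is dealt with by first mollifying $w$ in the $x$-directions to obtain $w_\varepsilon$ with the same extension structure, performing the above identity for $w_\varepsilon$, and then passing $\varepsilon\to 0$ using $w_\varepsilon\to w$ in $X^{2s}(\R^{N+1}_+)$ and $w_\varepsilon(\cdot,0)\to u$ in $L^1_{\mathrm{loc}}$ together with $u\in L^\infty(\R^N)$ and $F(u)\in L^1(\R^N)$; the key point is that the cutoff $\eta_R$ lets us avoid any appeal to decay at infinity, so the only control needed is the global integrability of $y^{1-2s}|\nabla w|^2$ and of $F(u)$.
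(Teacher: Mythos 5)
Your proposal is correct and follows essentially the same route as the paper's own proof in the appendix: the Caffarelli--Silvestre extension, testing against the multiplier $\big((x,y)\cdot\na w\big)\psi_R$ with a cutoff $\psi_R$, killing the $\na\psi_R$ error terms via $y^{1-2s}|\na w|^2\in L^1(\R^{N+1}_+)$ and $F(u)\in L^1(\Rn)$, and combining with the energy identity obtained by testing against $w\psi_R$. The computations (the pointwise identity for $\na w\cdot\na(Z\cdot\na w)$, the divergence of $y^{1-2s}Z$, and the vanishing of the $\{y=0\}$ boundary term since $Z\cdot{\bf n}=0$ there) all match the paper's argument.
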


\begin{proof}
We prove this theorem using the harmonic extension method introduced in Section 2. 

Let $u$ be a nontrivial positive solution of \eqref{oct-7-9}. Suppose, $w$ is the harmonic extension of $u$. Then $w$ is a solution of
\begin{equation}\lab{A-42'}
\left\{\begin{aligned}
      \text{div}(y^{1-2s} \na w) &=0  \quad\text{in}\quad \R^{N+1}_+,\\
  \frac{\pa w}{\pa \nu ^{2s} } &=f(w(.,0)) \quad\text{on}\quad\mathbb{R}^N,
          \end{aligned}
  \right.
\end{equation}
(see \eqref{A-42}).
For $r>0$, we define $B_r$ to be the ball in $\R^{N+1}$, that is, 
$$B_r:=\{(x,y)\in\R^{N+1}: |(x,y)|<r\}.$$
Define $$B_r^{+}=B_r\cap\R^{N+1}_+$$ and 
$$Q_r=B_r^+\cup(B_r\cap(\Rn\times\{0\})).$$
Let $\va\in C_{0}^{\infty}(\R^{N+1})$ with $0\leq\va\leq 1$, 
$\va=1$ in $B_1$, $\va$ has support in $B_2$ and $|\na\va|\leq 2$. For $R>0$, define
$$\psi_R(x,y)=\psi\bigg(\f{(x,y)}{R}\bigg), \quad\text{where}\quad \psi=\va|_{\overline{\R^{N+1}_+}}.$$
Multiplying \eqref{A-42'} by $((x,y)\cdot\na w)\psi_R$ and integrating in $\R^{N+1}_+$ we have,
\be\lab{sep-27-1}
\displaystyle\int_{Q_{2R}}\text{div}(y^{1-2s}\na w)\big[((x,y)\cdot\na w)\psi_R\big] dxdy=0.
\ee
Then integration by parts yields
\bea\lab{sep-27-2}
&\displaystyle\int_{Q_{2R}} y^{1-2s}\na w\na \big[((x,y)\cdot\na w)\psi_R\big] dxdy\no\\
&=\displaystyle\int_{\pa Q_{2R}} y^{1-2s}(\na w\cdot {\bf n}) \big[((x,y)\cdot\na w)\psi_R\big] dS\no\\
&\qquad\qquad\qquad\qquad=-\lim_{y\to 0+}\displaystyle\int_{B_{2R}\cap(\Rn\times\{y\})}y^{1-2s}\f{\pa w}{\pa y}(x,y)\big((x,y)\cdot\na w\big) \psi_R dx\no\\
&=k_{2s}^{-1}\displaystyle\int_{B_{2R}\cap(\Rn\times\{0\})}\big(x\cdot\na_x w\big) \psi_R \f{\pa w}{\pa\nu^{2s}}dx
\eea
where $k_{2s}$ is as defined in Section 2. In the above steps we have used the fact that $\psi_R=0$ on $\pa B_{2R}$. From \eqref{A-42'}, we know    $ \f{\pa w}{\pa\nu^{2s}}=f(w(x,0))$ on $\Rn$.  Therefore, RHS of  \eqref{sep-27-2} simplifies as
\bea\lab{sep-27-3}
\text{RHS of \eqref{sep-27-2}}&=&k_{2s}^{-1}\int_{B_{2R}\cap(\Rn\times\{0\})}\big(x\cdot\na_x w\big)f(w)\psi_R dx\no\\
&=& k_{2s}^{-1}\int_{B_{2R}\cap(\Rn\times\{0\})}\big(x\cdot\na_x F(w)\big)\psi_R dx\no\\
&=&-Nk_{2s}^{-1}\int_{B_{2R}\cap(\Rn\times\{0\})}F(w)\psi_R dx\no\\
&-&k_{2s}^{-1}\int_{B_{2R}\cap(\Rn\times\{0\})}F(w)(x\cdot\na_x\psi_R)dx.
\eea
 Since $w(x,0)=u(x)$, from \eqref{A-42'}, we find $F(w)=F(u)$ on $\Rn$. Moreover, $|\na\psi_R|\leq \f{2}{R}$. Hence, the 2nd integral on RHS of \eqref{sep-27-3} can be written as 
\bea\lab{sep-27-4}
\int_{B_{2R}\cap(\Rn\times\{0\})}F(w)(x\cdot\na_x\psi_R)dx&\leq& C\int_{(B_{2R}\setminus B_{R})\cap(\Rn\times\{0\})}F(u)\f{|x|}{R}dx\no\\
&\leq& C\int_{(B_{2R}\setminus B_{R})\cap(\Rn\times\{0\})}F(u)dx,
\eea
which converges to $0$ as $R\to\infty$ (since, $F(u)\in L^{1}(\Rn)$).
As a result, 
\bea\lab{sep-27-5}
\lim_{R\to\infty}\text{RHS of \eqref{sep-27-2}}=-Nk_{2s}^{-1}\int_{\Rn}F(u)dx
\eea
Next, we like to simplify LHS of \eqref{sep-27-2}. Towards this aim,
let us first simplify the term $\na w\na \big[((x,y)\cdot\na w)\psi_R\big]$.
\be\lab{sep-27-6}
\na w\na \big[((x,y)\cdot\na w)\psi_R\big]=(\na w\cdot\na\psi_R)((x,y)\cdot\na w)+\na w\cdot \na ((x,y)\cdot\na w)\ \psi_R.
\ee
By doing  a straight forward computation, we further simplify the 2nd term on the RHS of above expression as below:
\be
\na w\cdot \na ((x,y)\cdot\na w)\ \psi_R=\big[|\na w|^2+\f{1}{2}\sum_{j=1}^N\f{\pa}{\pa x_j}(|\na w|^2)x_j+\f{1}{2}\f{\pa}{\pa y}(|\na w|^2)y.\big]\psi_R.\no
\ee
Substituting back this expression into \eqref{sep-27-6} and then plugging \eqref{sep-27-6} into LHS of \eqref{sep-27-2}, we obtain
\bea\lab{sep-28-1}
\displaystyle\int_{Q_{2R}} y^{1-2s}\na w\na \big[((x,y)\cdot\na w)\psi_R\big] dxdy&=&\int_{Q_{2R}} y^{1-2s}|\na w|^2\psi_Rdxdy\no\\
&+&\f{1}{2}\int_{Q_{2R}} y^{1-2s}\bigg(\sum_{j=1}^N\f{\pa}{\pa x_j}(|\na w|^2)x_j\bigg)\psi_Rdxdy\no\\
&+&\f{1}{2}\int_{Q_{2R}} y^{2-2s}\bigg(\f{\pa}{\pa y}(|\na w|^2)\bigg)\psi_Rdxdy\no\\
&+&\int_{Q_{2R}} y^{1-2s}(\na w\cdot\na\psi_R)((x,y)\cdot\na w)dxdy.
\eea
Performing integration by parts on RHS of above expression, followed by simple computation yields,
\bea\lab{sep-28-2}
\displaystyle\int_{Q_{2R}} y^{1-2s}\na w\na \big[((x,y)\cdot\na w)\psi_R\big] dxdy&=&-\left(\f{N-2s}{2}\right)\int_{Q_{2R}} y^{1-2s}|\na w|^2\psi_Rdxdy\no\\
&-&\f{1}{2}\int_{Q_{2R}} y^{1-2s}|\na w|^2\big((x,y)\cdot\na\psi_R\big)dxdy\no\\
&+&\f{1}{2}\int_{\pa Q_{2R}}y^{1-2s}|\na w|^2\big((x,y)\cdot{\bf n}\big)\psi_RdS\no\\
&+&\int_{Q_{2R}} y^{1-2s}(\na w\cdot\na\psi_R)((x,y)\cdot\na w)dxdy.
\eea
Note that,
\bea\lab{sep-28-3}
\bigg|\int_{Q_{2R}} y^{1-2s}|\na w|^2\big((x,y)\cdot\na\psi_R\big)dxdy\bigg|&\leq&C\int_{Q_{2R}\setminus Q_R} y^{1-2s}|\na w|^2\f{|(x,y)|}{R}dxdy\no\\
&\leq&C\int_{Q_{2R}\setminus Q_R} y^{1-2s}|\na w|^2dxdy\no\\
&\To&0 \quad\text{as}\quad R\to\infty.
\eea
Similarly, \be\lab{sep-28-4}
\int_{Q_{2R}} y^{1-2s}(\na w\cdot\na\psi_R)((x,y)\cdot\na w)dxdy\To 0 \quad\text{as}\quad R\to\infty.
\ee
Since $\psi_R=0$ on $\pa B_{2R}$, we have,
\bea\lab{sep-28-5}
\f{1}{2}\int_{\pa Q_{2R}}y^{1-2s}|\na w|^2\big((x,y)\cdot{\bf n}\big)\psi_RdS&=&-\lim_{y\to0}\f{1}{2}\int_{B_{2R}\cap(\Rn\times\{0\})}y^{2-2s}|\na w|^2\psi_RdS\no\\
&=&0
\eea
Combining \eqref{sep-28-3}, \eqref{sep-28-4} and \eqref{sep-28-5} with \eqref{sep-28-2} we obtain 
\be\lab{sep-28-6}
\lim_{R\to\infty}\displaystyle\int_{Q_{2R}} y^{1-2s}\na w\na \big[((x,y)\cdot\na w)\psi_R\big] dxdy=-\left(\f{N-2s}{2}\right)\int_{\R^{N+1}_+} y^{1-2s}|\na w|^2dxdy.
\ee
Thus, \eqref{sep-28-6} and \eqref{sep-27-5} along with \eqref{sep-27-2}, yields
\be\lab{sep-28-7}
-\left(\f{N-2s}{2}\right)\int_{\R^{N+1}_+} y^{1-2s}|\na w|^2dxdy
=-Nk_{2s}^{-1}\int_{\Rn}F(u)dx.
\ee
We multiply \eqref{A-42'}  by $w\psi_R$ and then integrate by parts. Since $\psi_R=0$ on $\pa B_{2R}$ and $-\lim_{y\to 0}y^{1-2s}\f{\pa w}{\pa y}=k_{2s}^{-1}\f{\pa w}{\pa\nu^{2s}}$, we obtain
\bea\lab{sep-28-8}
\int_{Q_{2R}}y^{1-2s}\na w\cdot\na(w\psi_R)\ dxdy&=&\int_{\pa Q_{2R}}y^{1-2s}(\na w\cdot{\bf n})w\psi_R\ dS\no\\
&=&k_{2s}^{-1}\int_{B_{2R}\cap(\Rn\times\{0\})}\f{\pa w}{\pa\nu^{2s}}w\psi_R dx\no\\
&=&k_{2s}^{-1}\int_{B_{2R}\cap(\Rn\times\{0\})}f(u)u\psi_R dx.
\eea Therefore, 
\bea\lab{sep-28-9}
\lim_{R\to\infty}\text{RHS of \eqref{sep-28-8}}=k_{2s}^{-1}\int_{\Rn}uf(u)dx.
\eea
Proceeding same as before we show that 
\bea\lab{sep-28-10}
\lim_{R\to\infty}\text{LHS of \eqref{sep-28-8}}=\int_{\R^{N+1}_+}y^{1-2s}|\na w|^2\ dxdy.
\eea
Consequently, \be\lab{sep-28-11}
\int_{\R^{N+1}_+}y^{1-2s}|\na w|^2\ dxdy=k_{2s}^{-1}\int_{\Rn}f(u)udx.
\ee
Substituting \eqref{sep-28-11} into \eqref{sep-28-7}, we have
$$\left(\f{N-2s}{2}\right)\int_{\Rn}f(u)udx =N\int_{\Rn}F(u)dx,$$
which completes the proof.
\end{proof}

{\bf Acknowledgement:} Proof of Theorem \ref{t:grad-est} is based on the idea given by Dr. Joaquim Serra. The authors are indebted to him for the discussions and for his valuable comments. The first author is supported by the INSPIRE research grant
DST/INSPIRE 04/2013/000152 and the second author is supported by the
NBHM grant 2/39(12)/2014/RD-II. The authors also expresses their sincere gratitude to the anonymous referee for many valuable comments and suggestions, which helped to improve the manuscript greatly.


\begin{thebibliography}{XX}
\bibitem{A}
 {\sc D. Applebaum,}  L\'{e}vy processes from probability to finance and quantum groups, \textit{Notices Amer. Math. Soc.} 51 (2004), 1336-1347.
 
 \bibitem{BCPS}{\sc B. Barrios; E. Colorado;  A. De Pablo and U. S\'{a}nchez}, On some critical problems for the fractional
Laplacian operator. \textit{J. Diff. Eqns} 252 (2012), 6133--6162.

\bibitem{BMS}{\sc M. Bhakta; D. Mukherjee; S. Santra,} Profile of solutions for nonlocal equations with critical and supercritical nonlinearities,  \textit{submitted}, 
arXiv:1612.01759.

\bibitem{BS}{\sc M. Bhakta; S. Santra,} On a singular equation with critical and supercritical exponents, \textit{submitted}, arXiv:1608.00490 .

\bibitem{BCP}{\sc C. Br\"{a}ndle; E. Colorado;  A. de Pablo; U. S\'{a}nchez,}
A concave-convex elliptic problem involving the fractional Laplacian. \textit{Proc. Roy. Soc. Edinburgh Sect. A} 143 (2013), no. 1, 39--71.


\bibitem{CC} {\sc  X. Cabr\'{e};  E. Cinti},  Sharp energy estimates for nonlinear fractional diffusion equations. \textit{Calc. Var. Partial Differential Equations} 49 (2014), no. 1-2, 233--269.

\bibitem{CTan}{\sc X. Cabr\'{e}; J. Tan}, 
Positive solutions of nonlinear problems involving the square root of the Laplacian. 
\textit{Adv. Math.} 224 (2010), no. 5, 2052--2093. 
 
\bibitem{CS}{\sc  L. Caffarelli; L. Silvestre,} An extension problem related to the fractional Laplacian, \textit{Comm. Partial Differential Equations} 32
(2007) 1245--1260.

\bibitem{CT}
{\sc R. Cont; P. Tankov}, Financial modelling with jump processes.
 \textit{Chapman and Hall/CRC Financial Mathematics Series}, 2004.
 
 \bibitem{DDW}{\sc J. D\'{a}vila;  L. Dupaigne; J. Wei}, On the fractional Lane-Emden Equation, to appear in \textit{  Trans. Amer. Math. Soc.}
 
 \bibitem{DMV}{\sc S. Dipierro; M. Medina; E. Valdinoci}, Fractional elliptic problems with critical growth in the whole of $\Rn$. arXiv: 1506.01748.


\bibitem{FKS}{\sc E. Fabes; C. Kenig and R. Serapioni}, The local regularity of solutions of degenerate elliptic equations,
\textit{Comm. Partial Differential Equations} 7 (1982), 77--116.

 \bibitem{FW}{\sc  M. M. Fall; T. Weth},  Nonexistence results for a class of fractional elliptic boundary value problems. \textit{J. Funct. Anal.} 263 (2012), no. 8, 2205--2227.

\bibitem{FW1}{\sc  P. Felmer; Y. Wang,} Radial symmetry of positive solutions to equations involving the fractional Laplacian. \textit{Commun. Contemp. Math.} 16 (2014), no. 1, 1350023, 24 pp.

 \bibitem{GS}{\sc N. Ghoussoub; S. Shakerian}, Borderline variational problems involving fractional Laplacians and critical singularities. \textit{Adv. Nonlinear Stud.} 15 (2015), no. 3, 527--555.
 
 \bibitem{JW}{\sc S. Jarohs, T. Weth.} Asymptotic symmetry for a class of nonlinear fractional reaction-
diffusion equations. \textit{Discrete Contin. Dyn. Syst.} 34 (2014), 2581--2615.

\bibitem{JLX}{\sc  T. Jin; Y. Y. Li; J. Xiong}, On a fractional Nirenberg problem, part I: blow up analysis and compactness of solutions. \textit{J. Eur. Math. Soc.} (JEMS) 16 (2014), no. 6, 1111--1171.

\bibitem{MMPT}{\sc M. K. Kwong; J . B. Mcleod;  L. A. Peletier;  W. C. Troy}, On ground state
solutions of $-\De u = u^p - u^q$, \textit{J. Differential Equations}, 95, (1992), 218--239.

\bibitem{MP1} {\sc F. Merle; L. Peletier}, Asymptotic behaviour of positive solutions of elliptic equations with critical and supercritical growth. I. The radial case. \textit{ Arch. Rational Mech. Anal.} 112 (1990), no. 1, 1--19.

\bibitem{MP2} {\sc F. Merle; L. Peletier,} Asymptotic behaviour of positive solutions of elliptic equations with critical and supercritical growth. II. The non-radial case. \textit{J. Funct. Anal.} 105 (1992), no. 1, 1--41.

 \bibitem{Muckenhoupt}
{\sc B. Muckenhoupt}, Weighted norm inequalities for the Hardy maximal function,
\textit{ Trans. Amer. Math. Soc.} 165(1972), 207--226.

\bibitem{NPV}{\sc E. Di Nezza, G. Palatucci and E. Valdinoci}, Hitchhiker's guide to the fractional Sobolev spaces, \textit{Bull. Sci. Math.} 136 (2012), no. 5, 521--573.

\bibitem{PP}{\sc G. Palatucci; A. Pisante},  Improved Sobolev embeddings, profile decomposition, and concentration-compactness for fractional Sobolev spaces. \textit{Calc. Var. Partial Differential Equations} 50 (2014), no. 3-4, 799--829.

\bibitem{P}{\sc Y. J. Park}, Fractional Polya-Szego inequality. \textit{J. Chungcheong Math. Soc.} 24 (2011), no. 2,
267--271.

\bibitem{RS1} {\sc X. Ros-Oton; J. Serra},  Regularity theory for general stable operators. \textit{J. Differential Equations} 260 (2016), no. 12, 8675--8715.

\bibitem{RS4}{\sc  X. Ros-Oton; J. Serra},  The Dirichlet problem for the fractional Laplacian: regularity up to the boundary. \textit{J. Math. Pures Appl.} (9) 101 (2014), no. 3, 275--302.

\bibitem{RS2}{\sc X. Ros-Oton; J. Serra},  The Pohozaev identity for the fractional Laplacian. \textit{Arch. Ration. Mech. Anal.} 213 (2014), no. 2, 587--628.

\bibitem{RS3}{\sc  X. Ros-Oton; J. Serra}, Nonexistence results for nonlocal equations with critical and supercritical nonlinearities. \textit{Comm. Partial Differential Equations} 40 (2015), no. 1, 115--133.

\bibitem{SV3}{\sc R. Servadei; E. Valdinoci}, Weak and viscosity solutions of the fractional Laplace equation. \textit{Publ. Mat.} 58 (2014), no. 1, 133--154.

\bibitem{SerVal2}
{\sc R. Servadei; E. Valdinoci,} The Brezis-Nirenberg result for the fractional Laplacian.
\textit{  Trans. Amer. Math. Soc.} 367 (2015), no. 1, 67--102.

\bibitem{SerVal3}
{\sc R. Servadei; r. Valdinoci}, Mountain pass solutions for non-local elliptic operators. 
\textit{ J. Math. Anal. Appl.} 389 (2012), no. 2, 887--898.


\bibitem{TX} {\sc J. Tan; J. Xiong}, A Harnack inequality for fractional Laplace equations with lower order terms. \textit{Discrete Contin. Dyn. Syst.} 31 (2011), no. 3, 975--983.

 \bibitem{V}{\sc E. Valdinoci}, From the long jump random walk to the fractional Laplacian, \textit{Bol. Soc. Esp. Mat. Apl. SeMA No.} 49 (2009), 33--44.  
 
 \bibitem{VIKH}{\sc L. Vlahos; H. Isliker; K.  Kominis; K. Hizonidis}, \textit{Normal and anomalous diffusion : a tutorial}, in: T.Bountis(Ed.),Order and Chaos,10thVolume, Patras University Press, 2008.   


\end{thebibliography}
\end{document}